\newtheorem{thm}{Theorem}
\newtheorem{thmtool}{Theorem}[section]
\newtheorem{corollary}[thmtool]{Corollary}
\newtheorem{lem}[thmtool]{Lemma}
\newtheorem{prop}[thmtool]{Proposition}
\newtheorem{clm}[thmtool]{Claim}
\newtheorem*{thm*}{Theorem}
\theoremstyle{definition}
\theoremstyle{remark}
\numberwithin{equation}{section}
\newcommand{\N}{\mathbb N}
\newcommand{\FAS}{\mathrm {FAS}}
\newcommand{\danger}{\mathsf {dang}}
\newcommand{\avdanan}{\overline{\danger}}
\newcommand{\ed}{e_{\text{double}}}
\newcommand{\degb}{\deg_{B}}
\newcommand{\degm}{\deg_{M}}
\newcommand{\eps}{\varepsilon}
\def\squareforqed{\hbox{\rlap{$\sqcap$}$\sqcup$}}
\def\qed{\ifmmode\squareforqed\else{\unskip\nobreak\hfil
\penalty50\hskip1em\null\nobreak\hfil\squareforqed
\parfillskip=0pt\finalhyphendemerits=0\endgraf}\fi}
\begin{document}
\title{Biased orientation games}
\author{Ido Ben-Eliezer\thanks{School of Computer Science, Raymond and Beverly Sackler Faculy of Exact Sciences, Tel Aviv University, Tel Aviv 69978, Israel, e-mail: \textbf{idobene@post.tau.ac.il}. Research supported in part by an ERC advanced grant.} \and Michael Krivelevich \thanks{School of Mathematical Sciences, Raymond and Beverly Sackler Faculty of Exact Sciences, Tel Aviv University,
Tel Aviv 69978, Israel, e-mail: \textbf{ krivelev@post.tau.ac.il}.
Research supported in part by USA-Israel BSF grant 2006322, and by
grant 1063/08 from the Israel Science Foundation.} \and Benny
Sudakov \thanks{Department of Mathematics, UCLA, Los Angeles, CA
90095. Email: \textbf{bsudakov@math.ucla.edu}. Research supported in
part by NSF CAREER award DMS-0812005 and by a USA-Israel BSF
grant.}} \maketitle
\begin{abstract}
We study biased {\em orientation games}, in which the board is the
complete graph $K_n$, and Maker and Breaker take turns in directing
previously undirected edges of $K_n$. At the end of the game, the
obtained graph is a tournament. Maker wins if the tournament has
some property $\mathcal P$ and Breaker wins otherwise.

We provide bounds on the bias that is required for a Maker's win and
for a Breaker's win in three different games. In the first game
Maker wins if the obtained tournament has a cycle. The second game
is Hamiltonicity, where Maker wins if the obtained tournament
contains a Hamilton cycle. Finally, we consider the $H$-creation
game, where Maker wins if the obtained tournament has a copy of some
fixed graph $H$.
\end{abstract}
\section{Introduction}

In this work we study {\em orientation games}. The board consists of
the edges of the complete graph $K_n$. In the $(p:q)$ game the two
players, called {\em Maker} and {\em Breaker}, take turns in
orienting (or directing) previously undirected edges. Maker starts
the game and at each round, Maker directs at most $p$ edges and then
Breaker directs at most $q$ edges (usually, we consider the case
where $p=1$ and $q$ is large). The game ends where all the edges are
oriented, and then we obtain a {\em tournament}. Maker then wins if
the tournament has some fixed property $\mathcal P$, and Breaker
wins otherwise. Here we focus on the $1:b$ game, which is referred
to as the $b$-biased game. We stress that at each round, each player
has to orient at least one edge, so the number of rounds is clearly
bounded. Also, Maker (respectively, Breaker) can orient up to $p$
edges (respectively, up to $q$ edges) and hence by bias monotonicity
every property $\mathcal P$ admits some threshold $t(n,\mathcal P)$
so that Maker wins the $b$-biased game if $b<t(n,\mathcal P)$ and
Breaker wins the $b$-biased game if $b \geq t(n,\mathcal P)$

This game is a variant of the well studied classical Maker-Breaker
game, which is defined by a hypergraph $(X,\mathcal F)$ and bias
$(p:q)$. In that game, at each round Maker claims $p$ elements of
$X$, and Breaker claims $q$ elements of $X$. Maker wins if by the
end of the game he claimed all the elements of some hyperedge $A \in
\mathcal F$, and Breaker wins otherwise. Usually, a typical problem
goes as follows. Given a game hypergraph $H = (X, \mathcal F)$,
$|X|=n$, determine or estimate the threshold function $t_H$ such
that if $b > t_H$ then Maker wins in a $(1:t_H)$ game, and if $b
\leq t_H$ then Breaker wins in a $(1:t_H)$ game. There has been a
long line of research that studies the bias threshold of various
games (see, e.g.,~\cite{Beck08,BL00,CE78,GS09,Krivelevich11,KS08}
and their references).

Here we study orientation games for the following three properties.

\paragraph{Creating a cycle.} Maker wins if the obtained tournament
contains a cycle, and Breaker wins otherwise. It is well known that
a tournament contains a cycle if and only if it contains a directed
triangle (cycle of length $3$). This is a relatively old question
which has already been studied by Alon (unpublished result) and by
Bollob\'{a}s and Szab\'{o}~\cite{BS98}, and here we improve their
results.

\paragraph{Creating a Hamilton cycle.} Here Maker wins if the
obtained tournament contains a Hamilton cycle, and Breaker wins
otherwise. Recently, the second author~\cite{Krivelevich11} solved a
long standing question and provided tight bounds on the bias
threshold for Maker win in the classical Maker-Breaker Hamiltonicity
game. We use a variant of his approach, together with a new
application of the Gebauer-Szab\'{o} method~\cite{GS09} and give
tight bounds in our case as well.

\paragraph{Creating a copy of $H$.} Here we are given a fixed graph
$H$. Maker wins if the obtained tournament contains a copy of $H$,
and Breaker wins otherwise. We provide both upper and lower bounds,
and give some nearly tight bounds for specific cases. We conjecture
that the correct threshold is closely related to the size of the
{\em minimum feedback arc set} of $H$, and provide some results that
support this conjecture.

\paragraph{Our results.}

In this work we study the cycle game, the Hamiltonicty game and the
$H$-creation game. We stress that in all these games Maker wins if
the obtained tournament has the desired property, no matter who
directs each edge of a winning directed subgraph. Our first theorem
considers the cycle creating game. It is easy to observe that if $b
\geq n-2$ then Breaker has a winning strategy (for completeness, we
give a detailed proof in Section~\ref{section:cycle-game}).
Bollob\'{a}s and Szab\'{o}~\cite{BS98} proved that if $b =
(2-\sqrt{3})n$, Maker wins the game and conjectured that the correct
threshold is $b = n-2$.

In this work we provide a simple argument that improves their
result. We have the following.

\begin{thm}[The cycle game]
\label{thm:creating-cycle} For every $b \leq n/2-2$, Maker has a
strategy guaranteeing a cycle in the $b$-biased orientation game.
\end{thm}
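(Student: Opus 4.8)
The plan is to have Maker build a long directed path one vertex at a time, exploiting the fact that each new vertex forces Breaker to orient many ``chord'' edges or else concede an immediate directed cycle.

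First the setup. A tournament contains a directed cycle if and only if it contains a directed triangle, so Maker aims to create a directed triangle. Call an undirected edge $\{x,y\}$ a \emph{threat} if the already oriented edges contain a directed path of length at least two from $x$ to $y$ (or from $y$ to $x$); on his turn Maker may then orient $\{x,y\}$ so as to close a directed cycle and win. Orienting other edges can never turn a threat edge into a non-threat while it stays undirected, so the only way Breaker removes a threat is by orienting that very edge. Hence, if at the start of some Breaker move there are more than $b$ threats, at least one survives Breaker's reply and Maker wins. Thus it suffices for Maker to reach, right after one of his own moves, a position with more than $b$ threats.

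Now the strategy. Maker first orients an arbitrary edge $v_1 \to v_2$. Having built a directed path $v_1 \to v_2 \to \cdots \to v_k$, he chooses a vertex $v_{k+1}$ off the path --- whenever possible, one all of whose $k$ edges to $\{v_1,\dots,v_k\}$ are still undirected --- and orients $v_k \to v_{k+1}$. (If at the start of a move some threat is already present, or if an already oriented edge has closed a cycle, Maker wins outright; and if some edge from the path to $v_{k+1}$ already points into the path, say $v_{k+1} \to v_i$, Maker orients $v_k \to v_{k+1}$ and wins at once via the cycle $v_i \to \cdots \to v_k \to v_{k+1} \to v_i$.) After Maker orients $v_k \to v_{k+1}$, each still-undirected edge $\{v_i, v_{k+1}\}$ with $i \le k-1$ is a threat, since there is a directed path $v_i \to v_{i+1} \to \cdots \to v_k \to v_{k+1}$. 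In particular Breaker is forced to orient all of these (forward) on his next move, and more generally he must keep every chord of Maker's path oriented forward, i.e.\ keep the path's vertices spanning a transitive tournament. So extending the path from $k$ to $k+1$ vertices confronts Breaker with up to $k-1$ freshly forced orientations; as soon as one extension produces more than $b$ of them with the relevant edges undirected, Breaker cannot keep up and Maker wins on the following move.

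The main obstacle --- and where the bound $b \le n/2-2$ enters --- is the bookkeeping ensuring Maker can keep extending. The key point is that essentially all of Breaker's moves are \emph{forced}: building a path on $k$ vertices already requires orienting all $\binom{k-1}{2}$ of its chords, and over Maker's first $k-1$ moves Breaker has made at most $(k-1)b$ moves in total, so for $k$ of order $2b$ these forced orientations swallow almost all of Breaker's activity, leaving him almost no ``free'' moves with which to pre-orient edges between the path and outside vertices. On the other side, Maker needs an outside vertex whose edges to the current path are (almost all) still undirected, and the hypothesis gives $n-(b+2) \ge b+2$ such candidates even when the path has $b+2$ vertices. One must balance the number of outside vertices Breaker can ``spoil'' with his free moves against the number available, and choose the length at which Maker cashes in his threats --- a little past $b+2$ vertices, to absorb the few edges Breaker does manage to pre-orient --- so that this balance favours Maker. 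Carrying out this count is the one real computation, and it is exactly what pins down the threshold at $n/2-2$.
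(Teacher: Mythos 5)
You take the same high-level route as the paper: grow a directed path, observe that Breaker is forced to orient every chord forward or concede a directed cycle, and count chords against Breaker's budget. But there are two genuine gaps.

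First, the strategy itself. You always append a new vertex at the tail, orienting $v_k \to v_{k+1}$. This move is not always legal. Breaker can use free moves to orient edges from outside vertices \emph{into} the current tail $v_k$, and your parenthetical catch (``if $v_{k+1}\to v_i$ for some $i$, orient $v_k\to v_{k+1}$ and win'') only applies when the edge $\{v_k,v_{k+1}\}$ itself is still undirected; if Breaker has already set $v_{k+1}\to v_k$, no cycle is yet available and the append is illegal. You gesture at a count of how many outside vertices Breaker can ``spoil'' this way, but the count is never carried out, and early in the game Breaker has a quadratic number of free moves ($\approx(k-1)b - \binom{k-1}{2}$) with which to do the spoiling, while only linearly many outside vertices exist. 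The paper avoids the issue entirely with Lemma~\ref{lem:maker-strategy-creating-cycle}: Maker can always lengthen the longest directed path by one, possibly by \emph{inserting} a vertex into the middle (take any outside $v$, take the largest $k$ with no edge $v\to u_k$, note $u_k v$ must then be undirected, and orient $u_k\to v$). That one-paragraph argument is the load-bearing piece you are missing.

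Second, the arithmetic. You correctly identify the balance — $\binom{k-1}{2}$ forced chords against $(k-1)b$ Breaker moves — and even write ``$k$ of order $2b$,'' but then state the cash-in point as ``a little past $b+2$ vertices,'' which is off by a factor of two. Breaker only runs out of room when $\binom{k-1}{2}>(k-1)b$, i.e.\ when $k$ is roughly $2b+3$, and the requirement that such a path fit into $K_n$ ($2b+3\le n$, or thereabouts) is exactly what produces the threshold $b\le n/2-2$. The paper's version of the computation is short: after $t$ rounds the path spans more than $\binom{t-1}{2}-(t-1)$ potential cycle-closing chords, while Breaker has oriented at most $b(t-1)$ edges in total; once the former exceeds the latter, some chord remains undirected and Maker closes a cycle. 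You would need to actually perform this count (and confirm the path can grow that long under your append-only strategy, which is where the first gap bites) for the proof to be complete.
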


The second game we consider is the Hamiltonicity game, where Maker
wins if and only if the obtained tournament contains a Hamilton
cycle. Here we apply old and recent
techniques~\cite{CE78,GS09,Krivelevich11} to get tight bounds on the
bias threshold for a win of Breaker.

\begin{thm} [The Hamiltonicity game]
\label{thm:create-hamilton} \mbox{}
\begin{enumerate}
\item If $b \geq \frac{n(1+o(1))}{\ln{n}}$, Breaker has a
strategy to guarantee that in the $b$-biased orientation game the
obtained tournament has a vertex of in-degree $0$, and in particular
to win the Hamiltonicity game. \label{item:hamilton-upper-bound}
\item If $b \leq \frac{n(1+o(1))}{\ln{n}}$, Maker has a
strategy guaranteeing a Hamilton cycle in the $b$-biased orientation
game. \label{item:hamilton-lower-bound}
\end{enumerate}
\end{thm}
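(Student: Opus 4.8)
\textbf{Part \textit{(i)} (Breaker).}
The plan is to have Breaker force a vertex of in-degree $0$; a tournament with such a vertex has no Hamilton cycle, as that vertex has no possible predecessor on a cycle. I would set this up as a box game in the style of Chv\'atal--Erd\H{o}s~\cite{CE78}: put a ``box'' $B_v$ on each vertex $v$, consisting of the $n-1$ edges at $v$, and call $v$ \emph{spoiled} once some edge has been oriented towards it. Breaker plays the role of the box-maker with bias $b$, trying to orient \emph{all} of $v$'s edges away from $v$ for some still-unspoiled $v$; each move of Maker orients one edge and so spoils at most one vertex, exactly as one move of the box-breaker destroys one box. The only genuinely new feature relative to the classical vertex-isolation game is that Breaker's own orientations can spoil vertices, but this costs only lower-order terms: when Breaker works on a threatened vertex $v$ he orients its edges towards vertices he has already given up on whenever possible, and he keeps the edges inside his (small) target set undirected until the end. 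The Chv\'atal--Erd\H{o}s box-game analysis then carries over and yields a Breaker win as soon as $b\ge\frac{n(1+o(1))}{\ln n}$, with the $(1+o(1))$-factor being exactly the box-game threshold; as in the classical case, the delicate points are the overlap of the boxes and the extraction of the sharp constant.

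\textbf{Part \textit{(ii)} (Maker).}
Here I would begin with two reductions. By a classical theorem (Camion, Moon) every strongly connected tournament is Hamiltonian, so it suffices for Maker to force the final tournament to be strongly connected; and since adding arcs cannot destroy strong connectivity, it is enough for Maker to make \emph{his own} digraph $M$ --- the arcs he has oriented, with his chosen orientations --- strongly connected. Maker then plays in two stages, following the scheme of~\cite{Krivelevich11}. In Stage~1 he spends $O(n)$ of his moves building a ``directed expander'': a spanning subdigraph $M_1$ of $M$ such that, for a suitable $k=n/\mathrm{polylog}(n)$, every set $S$ with $|S|\le k$ has $|N^+_{M_1}(S)|\ge 2|S|$ and $|N^-_{M_1}(S)|\ge 2|S|$ (so $M_1$ has positive in- and out-degree everywhere and all its weak components have more than $k$ vertices). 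This is the step pinning down the threshold, and I would run the Gebauer--Szab\'o potential method~\cite{GS09} here: it gives Maker a robust, fast strategy for maintaining such degree/expansion demands against any bias $b\le\frac{n(1+o(1))}{\ln n}$, the binding constraint being positive in- and out-degree at every vertex, whose threshold coincides with the box-game threshold of Part~\textit{(i)}. In Stage~2 Maker repeatedly adds \emph{boosters}: arcs whose addition inside $M$ strictly decreases the number of strongly connected components (or merges two weak components). The combinatorial point is that a digraph with the above expansion that is not yet strongly connected has $\Omega(k^2)$ booster pairs --- a source class and a sink class of the condensation DAG each span more than $k$ vertices by expansion, and linking a sink class to a source class along a directed path of the condensation merges classes, leaving $\Omega(k^2)$ choices of arc. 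Over the $O(n)$ rounds of Stage~2 Breaker orients only $O(n)\cdot b=o(n^2)$ arcs, hence cannot block all boosters; so Maker orients an available booster into $M$ each round and reaches strong connectivity after $O(n)$ rounds, which finishes the proof.

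\textbf{The main obstacle.}
I expect the crux to be Stage~1 of Part~\textit{(ii)}: formulating the directed expansion property and choosing the parameters so that Maker can both build it against the essentially optimal bias $\frac{n(1+o(1))}{\ln n}$ --- which is where the Gebauer--Szab\'o weight-function argument must be set up just right --- and still be left with enough boosters to complete Stage~2. By comparison, the box-game bookkeeping in Part~\textit{(i)} and the passage from the usual undirected booster machinery to its directed analogue are routine, though they still have to be carried out with care.
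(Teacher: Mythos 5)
For Part \textit{(i)} you take essentially the paper's route: reduce to a Chv\'atal--Erd\H{o}s box game in which Breaker (playing Box-Maker) tries to direct all edges out of some vertex, while Maker (playing Box-Breaker) destroys one box per round. The paper's treatment of the two wrinkles you flag is, however, sharper than your sketch. First, the paper fixes a set $A$ of size $b$ and defines the box $X_v$ for $v\in A$ to consist only of the $n-b$ edges from $v$ into $B = V\setminus A$; this makes the boxes disjoint (eliminating the overlap you worry about) and also means Breaker's own moves never spoil a vertex of $A$, since he only orients $A\to B$ arcs. Second, and more importantly, the paper proves a \emph{two-box} variant of the Chv\'atal--Erd\H{o}s theorem (Claim~\ref{clm:Chvatal-Erdos-2-boxes}): Breaker must get two boxes $X_u, X_{u'}$ fully outward-oriented at once, because before Breaker can clean up the $A$-internal edges at $u$, Maker gets one more move and could direct an $A$-internal edge into $u$; with two candidates ready, Maker can spoil at most one. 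Your ``keep the edges inside his small target set undirected until the end'' gestures at the right idea, but without the two-box variant the argument has a gap precisely at the finishing move, and this is a qualitative modification rather than a lower-order bookkeeping issue.

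For Part \textit{(ii)}, your Stage 1 matches the paper: Gebauer--Szab\'o on the bipartite double cover $K_{n,n}$, with Maker choosing randomly among the many free incident edges, giving min in/out-degree at least $3$ and, with high probability, $|N^{+}(S)|,|N^{-}(S)|>0$ for every $S$ of size at most $k = n/(\ln n)^{2/5}$. Your Stage 2, though, is genuinely different from the paper's. You run an adaptive booster argument: while Maker's digraph $M$ is not strongly connected, a source component $S$ and a reachable sink component $T$ of the condensation each have more than $k$ vertices (by the Stage 1 expansion), giving more than $k^2$ candidate arcs from $T$ into $S$; since only $O(n^2/\ln n) = o(k^2)$ edges get oriented across both stages, a free booster always exists, the number of strongly connected components drops each round, and Maker finishes in at most $n-1$ rounds. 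The paper instead fixes a pseudorandom reference tournament $T^*$ with $\Theta(n^2/(\ln n)^{4/5})$ arcs in each direction between every pair of disjoint size-$k$ sets, and applies the Erd\H{o}s--Selfridge criterion (with the roles of the two players switched) to ensure that for every such pair $(A,B)$ the final tournament has an arc from $A$ to $B$; combined with Stage 1 this yields the $k$-expanding property of Lemma~\ref{lem:expanders-are-strongly-connected}, hence strong connectivity. Both routes work: yours is more direct and avoids the pseudorandom auxiliary tournament, while the paper's makes Maker's Stage 2 strategy a clean non-adaptive reduction to one fixed hypergraph. One small economy: you ask for $|N^{+}(S)|,|N^{-}(S)|\geq 2|S|$ from Stage 1, but your booster count only needs $|N^{+}(S)|,|N^{-}(S)|>0$ (exactly what the paper proves), so you are demanding more of Stage 1 than you actually use.
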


In the $H$-creation game we have some partial results. We conjecture
that the bias that guarantees Maker's win depends on the minimum
feedback arc set of $H$, and support this result for graphs with a
small feedback arc set. We will give and discuss corresponding
notion in Section~\ref{sec:h-creation}.

\section{Preliminaries}

Let $K_n$ be the complete graph on $n$ vertices, a tournament is an
orientation of $K_n$. A directed graph is called {\em oriented} if
it contains nor loops neither cycles of length $2$. Every oriented
graph is a subgraph of a tournament. A directed graph is {\em
strongly connected} if for every two vertices $u,v$ there is a
directed path from $u$ to $v$ and a directed path from $v$ to $u$.
All directed graphs we consider here are {\em oriented}, i.e., do
not have parallel or opposite edges.

All logarithms are in base $2$ unless stated otherwise.

The {\em classical} Maker and Breaker game goes as follows. Given a
hypergraph $H=(V,\mathcal F)$, at every round Maker occupies $p$
elements from $V$, and then Breaker occupies $q$ elements from $V$.
By the end of the game, Maker wins if he occupies completely some
hyperedge in $\mathcal F$, and otherwise Breaker wins. The well
known results of Erd\H{o}s and Selfridge~\cite{ES73} and
Beck~\cite{Beck82} give a sufficient condition for a Maker's win.
\begin{thm}
\label{thm:Erdos-selfride-beck} Suppose that Maker and Breaker play
a $(p:q)$-game on a hypergraph $H=(V,\mathcal F)$. If
$$
\sum_{A \in \mathcal F} (q+1)^{-\frac{|A|}{p}} < \frac{1}{q+1},
$$
Then Breaker has a winning strategy, even if Maker starts the game.
\end{thm}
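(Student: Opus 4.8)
The plan is to run the standard potential (weight) function argument behind the Erdős–Selfridge theorem, adapted to the bias $(p:q)$. To each position of the game --- a partition of $V$ into Maker's, Breaker's, and still-free vertices --- associate the potential
$$
\Phi \;=\; \sum_{A\ \mathrm{alive}} (q+1)^{-f(A)/p},
$$
where $A$ ranges over the hyperedges of $\mathcal F$ that are \emph{alive}, i.e. contain no Breaker vertex, and $f(A)$ is the number of free vertices of $A$. If at some moment Maker owns all of a hyperedge $A$, then $A$ is alive with $f(A)=0$, so it contributes $1$ and forces $\Phi\ge 1$; hence it suffices to give Breaker a strategy keeping $\Phi<1$ right after every Maker move, since this prevents Maker from ever completing a hyperedge, and the board being finite this makes Breaker win.

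Breaker's strategy is greedy: on each turn he claims his (up to) $q$ vertices one at a time, each time taking a currently free vertex $w$ of maximal \emph{danger} $d(w)=\sum_{A\ni w,\ A\ \mathrm{alive}}(q+1)^{-f(A)/p}$, i.e. the free vertex whose deletion lowers $\Phi$ the most. The analysis rests on two estimates. (a) A Breaker move only kills hyperedges and leaves $f(\cdot)$ unchanged on the survivors, so the danger of every remaining free vertex can only decrease during his turn; thus, writing $M'$ for the largest danger of a free vertex in the position at the \emph{end} of Breaker's turn, the total decrease of $\Phi$ over that turn is at least $qM'$, since each of the $q$ greedy picks removes the current maximum danger, which by the monotonicity just noted is at least $M'$. (b) When Maker claims a set $S$ of at most $p$ free vertices, each alive hyperedge $A$ has its weight multiplied by $(q+1)^{|A\cap S|/p}$, and the convexity bound $(q+1)^t-1\le qt$ for $t\in[0,1]$ (applied with $t=|A\cap S|/p$) shows that the increase of $\Phi$ is at most $\frac{q}{p}\sum_{A\ \mathrm{alive}}(q+1)^{-f(A)/p}|A\cap S|=\frac{q}{p}\sum_{v\in S}d(v)$, hence at most $q$ times the largest danger of a free vertex in the position \emph{before} Maker's turn.

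Combining (a) and (b): suppose a position $P$ with $\Phi(P)<1$ is reached immediately before a Breaker turn. Breaker plays and reaches $P'$ with $\Phi(P')\le\Phi(P)-qM'$; then Maker plays, and since the largest danger of a free vertex at $P'$ equals $M'$, estimate (b) makes the potential rise by at most $qM'$, so the resulting position $P''$ satisfies $\Phi(P'')\le\Phi(P)<1$. Hence $\Phi<1$ persists across every round beginning with a Breaker move. For the base case, note that initially $\Phi=\sum_{A\in\mathcal F}(q+1)^{-|A|/p}<\tfrac1{q+1}$ by hypothesis; as Maker moves first, applying (b) to his opening move (where the largest danger is at most $\Phi$) gives $\Phi\le(q+1)\cdot\tfrac1{q+1}=1$ right after it --- strictly below $1$, since the hypothesis is strict. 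From then on the round invariant applies, so $\Phi<1$ after every Maker move and Breaker wins.

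I expect the one point needing care to be the bookkeeping of \emph{when} each danger is evaluated: the bound on Maker's gain must be phrased through the dangers of the position Breaker just left, so that it is dominated by the drop Breaker achieved a move earlier --- which is exactly why a ``round'' is taken to begin with a Breaker move, the opening Maker move being absorbed into the base case instead. The convexity inequality $(q+1)^t\le 1+qt$ on $[0,1]$ (which is what lets one handle $p>1$ without the wasteful factor $(1+q/p)^p$) and the observation that a player claiming fewer than his full quota near the end of the game only helps Breaker are both routine.
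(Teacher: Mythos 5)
The paper does not prove this theorem; it is stated and cited to Erd\H{o}s--Selfridge~\cite{ES73} and Beck~\cite{Beck82}, so there is no internal proof to compare against. Your argument is the standard potential-function proof of Beck's biased criterion, and it is correct: the definitions of ``alive'' and of danger are right; the lower bound $qM'$ for Breaker's drop follows correctly from greediness plus the monotonicity of danger under Breaker's picks; the upper bound $qM'$ for Maker's gain follows from $(q+1)^t\le 1+qt$ on $[0,1]$ applied with $t=|A\cap S|/p$; and the base case correctly absorbs Maker's opening move using $M_0\le\Phi_0<\tfrac1{q+1}$. The bookkeeping of when dangers are evaluated (the $M'$ common to parts (a) and (b) is taken at the position Breaker just left) is exactly what makes the round telescoping work, and you identify that explicitly. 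The handling of short moves near the end of the game is, as you say, routine and harmless.
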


An orientation game is defined by a series of moves by Maker and
Breaker. In every {\em round}, Maker orients $1 \leq m_t \leq p$
edges (usually in our settings $p=1$) and Breaker orients $1 \leq
b_t \leq q$ edges (usually in our settings $q = \omega(1)$). The
game ends where all the edges are oriented, so the obtained graph is
a tournament. Maker wins if the tournament has some predetermined
property $\mathcal P$, otherwise Breaker wins.

We denote by $H_t$ the obtained oriented graph after $t$ rounds.
Clearly, this graph has at most $(p+q) \cdot t$ edges.

Given a directed graph $G = (V,E)$, we write $(u,v) \in E$ if there
is an edge from $u$ to $v$. Given a set $A \subseteq V$, we let
$$N^+(A) = \{u \in V \setminus A : \exists v \in A, (v,u) \in E \},$$
and
$$N^-(A) = \{u \in V \setminus A : \exists v \in A, (u,v) \in E \}.$$

A tournament $T$ on $n$ vertices is {\em transitive} if there is a
bijection $\sigma : V(T) \to [n]$ such that for every edge $(u,v)
\in E(T)$, $\sigma(u) < \sigma(v)$. A tournament $T = (V,E)$ is
$k$-colorable if there is a partition of $V$ into $k$ sets
$V_1,\ldots,V_k$ such that the induced tournament on each $V_i$ is
transitive. Thus, a transitive tournament is $1$-colorable.

\section{The cycle game}
\label{section:cycle-game}

In this section we prove Theorem~\ref{thm:creating-cycle}. Namely,
we show that in the $(n/2-2)$-biased game Maker can create a cycle.
For the sake of completeness we also prove that in the
$(n-2)$-biased game Breaker can create an acyclic tournament.

\paragraph{Breaker's strategy.} Suppose that $b \geq n-2$, we show
that Breaker can block all cycles in the graph as follows. Whenever
Maker orients an edge from $u$ to $v$, Breaker responds by orienting
all edges from $u$ to every vertex $w \in V(K_n)$ such that the edge
$uw$ has not been oriented yet. Clearly, Breaker in his turn has to
orient at most $n-2$ edges.

We proceed by proving that no cycle is created when Breaker applies
this strategy. Indeed, suppose that a cycle $C$ is created and let
$(u,v)$ the first edge in $C$ that was oriented (by either Maker or
Breaker), and suppose also that $(w,u) \in C$. If Maker orients the
edge from $u$ to $v$, by the strategy above Breaker responses by
orienting the edge from $u$ to $w$, and thus $(w,u) \notin C$. If
Breaker orients the edge from $u$ to $v$, he did it because Maker
oriented some other edge from $u$ to some vertex $z$. In this case,
again Breaker will also orient the edge from $u$ to $w$, and
therefore again $(w,u) \notin C$. We conclude that no cycle is
created.

\paragraph{Maker's strategy.} Our main lemma states that Maker has a
strategy so $H_t$ contains a directed path of length $t$ throughout
the game.
\begin{lem}
\label{lem:maker-strategy-creating-cycle} In the $b$-biased game,
Maker has a strategy $S_M$ such that for every $t \leq n-1$, the
graph $H_t$ obtained after $t$ rounds contains a directed path of
length $t$.
\end{lem}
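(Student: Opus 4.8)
The plan is to maintain, as an invariant, a directed path $P_t$ on $t+1$ vertices after round $t$, and to show Maker can always extend it by one vertex per round regardless of Breaker's $b \le n/2-2$ moves. Let $P_t = v_0 v_1 \cdots v_t$ be the current path, with head $v_0$ and tail $v_t$. On Maker's next turn, Maker wants to orient a single edge that grows the path at one of its two ends: either an edge $(u, v_0)$ for some vertex $u \notin V(P_t)$ (prepending $u$), or an edge $(v_t, u)$ for some $u \notin V(P_t)$ (appending $u$). So the natural approach is: Maker can extend the path at round $t+1$ as long as there exists an \emph{unoriented} edge between $\{v_0, v_t\}$ and $V(K_n) \setminus V(P_t)$, and Maker can orient it in the correct direction (which he can, since Maker himself chooses the orientation of the edge he claims).

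The key step is the counting argument that such an unoriented edge always exists when $b \le n/2 - 2$. After round $t$ the set $V(K_n) \setminus V(P_t)$ has $n - (t+1)$ vertices. Breaker has made at most $bt$ moves in total (and Maker has made $t$ moves, $t-1$ of which were path-extensions plus possibly earlier setup; more carefully, the total number of oriented edges after $t$ rounds is at most $(1+b)t$). The edges incident to the pair $\{v_0, v_t\}$ that lead into the complement number $2(n - t - 1)$, minus at most a couple for the edge $v_0 v_t$ itself if $t \ge 2$. For Maker to be stuck, \emph{all} of these edges plus the edges from $v_0$ and $v_t$ to each other must already be oriented — but actually we only need to block extension, so we need all $2(n-t-1)$ of these specific edges oriented. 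I would argue by induction that at the start of round $t+1$ the number of oriented edges incident to $\{v_0, v_t\}$ is small: only Breaker's $b$ edges from the last round, plus the one Maker just played, plus whatever was incident to the two current endpoints before — and here is where care is needed, because $v_0$ and $v_t$ change over time, so the endpoints could already have many oriented edges from the distant past. The clean fix is the standard one: have Maker always extend at the \emph{tail} $v_t$, and observe $v_t$ is a \emph{fresh} vertex (it became the tail only last round, via the edge Maker just oriented into it), so the only oriented edges at $v_t$ are: the path-edge $v_{t-1} v_t$, and whatever Breaker oriented touching $v_t$ since it entered the picture. Summing over the (at most $b$) Breaker edges per round is exactly where the bound $b \le n/2 - 2$ enters.

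To make this precise I would track, after Maker's move in round $t+1$ producing tail $v_{t+1}$, the quantity $d_t := (\text{number of oriented edges from } v_t \text{ to } V(K_n)\setminus V(P_t))$. The tail $v_{t+1}$ was not on the path before round $t+1$, so before Maker's move at most $bt$... no — more sharply, $v_{t+1}$ can only have acquired oriented edges in rounds where it was targeted; in the worst case Breaker dumps all $b$ of his edges onto the current tail each round to "poison" future tails, but each Breaker edge touches only $2$ vertices, so over all $t$ rounds Breaker poisons the eventual tail with at most... this needs the observation that Breaker commits his edges before knowing which vertex becomes the next tail only in part. The honest accounting: when Maker is about to extend from $v_t$, the forbidden vertices in the complement are those $u$ with edge $v_t u$ already oriented; these number at most $\deg(v_t)$ in $H_t$, and $v_t$ entered the path in round $t$ so $\deg_{H_t}(v_t) \le 1 + (\text{Breaker edges at } v_t \text{ in rounds } t \text{ and } t+1\text{'s prefix})$. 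If Breaker always answers by throwing all $b$ edges at the just-created tail, then $\deg(v_t) \le b+1$, and Maker can extend provided $b + 1 < n - t - 1$, i.e. $b < n - t - 2$; for $t \le n-1$ this needs $b$ small, and the factor-$2$ in $n/2 - 2$ comes from the fact that Breaker gets to poison at \emph{both} the current tail \emph{and} pre-position against the next one, or equivalently from counting edges at $\{v_{t-1}, v_t\}$ — the precise bookkeeping is the one routine calculation I would carry out. The main obstacle, then, is precisely this amortized count: showing that no matter how Breaker distributes his $bt$ edges, the endpoint Maker chooses to extend from always has an unoriented edge to the linearly-many-remaining outside vertices; once the right potential function is chosen ($d_t$ above, or the count of oriented edges at $\{v_0,v_t\}$), the induction closes with $b \le n/2-2$, and feeding the resulting length-$(n-1)$ Hamilton path two more Maker moves — or rather, noting a directed path on all but one vertex plus the last two edges — yields a directed cycle, giving Theorem~\ref{thm:creating-cycle}.
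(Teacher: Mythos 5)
There is a genuine gap, and it stems from attempting the wrong kind of argument. The lemma is a purely \emph{structural} statement about oriented graphs: it holds for \emph{every} bias $b$ (so long as the game has not yet ended), and its proof involves no counting whatsoever. The hypothesis $b \le n/2-2$ that you are trying to thread through the proof belongs to the derivation of Theorem~\ref{thm:creating-cycle} from this lemma (where one counts back-edges along the long path), not to the lemma itself.

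Concretely, your strategy of extending the path only at its two endpoints $v_0$ and $v_t$ does not work. Breaker can orient \emph{all} edges from the current tail to the outside on his very next move (he has $b$ edges to spend), and he can also pre-orient edges at vertices that will later become endpoints; your degree bound $\deg_{H_t}(v_t)\le b+1$ is therefore false, and your own write-up admits the bookkeeping never closes. The paper instead uses an \emph{insertion} move: take a longest directed path $P_t=u_1,\ldots,u_r$ and any vertex $v$ off it. Let $k$ be the largest index such that there is no oriented edge from $v$ to $u_k$ (well-defined, since an edge $v\to u_1$ would let one prepend $v$, contradicting maximality). If the pair $u_kv$ were already oriented $u_k\to v$, one could form a strictly longer path --- namely $u_1,\ldots,u_r,v$ when $k=r$, or $u_1,\ldots,u_k,v,u_{k+1},\ldots,u_r$ when $k<r$ (using $v\to u_{k+1}$, which exists by maximality of $k$) --- again contradicting maximality of $P_t$. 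Hence $u_kv$ is still free, and Maker orients it $u_k\to v$, inserting $v$ into the path. This ability to insert into the \emph{middle} of the path, forced by the near-tournament structure of oriented graphs, is exactly what your endpoint-only approach is missing, and it eliminates the need for any counting inside the lemma.
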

\begin{proof}
We prove by induction that assuming that there are no cycles in the
graph, Maker can extend a longest path by one, no matter how Breaker
plays. Clearly Maker can create a path of length $1$ at the first
round. Suppose that the longest path in $E(H_t)$ is $P_t =
u_1,u_2,\ldots,u_r$, where $r \geq t$. Let $v$ be a vertex not in
the path. Let $k$ be the maximal index such that there is no edge
from $v$ to $u_k$. This is well defined as if there is an edge from
$v$ to $u_1$ then $v,u_1,\ldots,u_r$ is a longer path, contradicting
the maximality of $P_t$.

Observe first that if there is an edge in the opposite direction
from $u_k$ to $v$ then $u_1,\ldots,u_r$ is not a maximal path.
Indeed, if $k=r$ then $u_1,\ldots,u_r,v$ is a longer path; Otherwise
by the definition of $k$ there is an edge from $v$ to $u_{k+1}$ and
therefore $u_1,\ldots,u_k,v,u_{k+1},\ldots,r$ is a longer path, and
in both cases this contradicts the maximality of $P_t$.

Therefore Maker in his turn orients the edge from $u_k$ to $v$ and
creates a path of length at least $r+1$, and the result follows.
\end{proof}

The strategy of Maker is as follows. At each round, if he can close
a cycle he does so and wins. Otherwise, he increases the length of a
longest directed path. We next show that after large enough number
of rounds, Breaker cannot block all possible cycles.

\paragraph{Proof of Theorem~\ref{thm:creating-cycle}.} As long as Maker cannot orient an
edge such that a cycle is created, Maker can extend a longest
directed path by $1$ by
Lemma~\ref{lem:maker-strategy-creating-cycle}. After $t$ rounds,
there is a path $P_t$ of length at least $t$. Let $V_t = V(P_t)$.
There are ${t \choose 2} - t$ potential edges in $G[V_t]$ such that
orienting any of them creates a cycle.

Consider the graph $H_{t-1}$ just before Maker starts round $t$.
There are ${t-1 \choose 2} - (t-1)$ edges that may close a cycle, of
them at most $(b+1)(t-1)-(t-1)$ were oriented in previous rounds. If
$(b+1)(t-1)-(t-1)<{t-1 \choose 2} - (t-1)$ at the beginning of round
$t$ then Maker wins. Unless Maker wins before that, the game lasts
at least $\frac{{n \choose 2}}{b+1}$ rounds, and therefore by taking
$t \geq \frac{{n \choose 2}}{b+1}$ we get that if $b \leq n/2-2$
then Maker surely wins. \qed

\section{The Hamiltonicity game}
\label{sec:Hamiltonicity} In this game Maker wins if the obtained
tournament contains a Hamilton cycle, and Breaker wins otherwise. We
start with the following easy and well known lemma, whose proof is
given here for completeness.
\begin{lem}
\label{lem:strong-conn-implies-hamiltonicity} Let $T$ be a strongly
connected tournament. Then $T$ contains a Hamilton cycle.
\end{lem}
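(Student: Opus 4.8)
The plan is to argue via a longest directed cycle (Camion's theorem). We may assume $n:=|V(T)|\ge 3$, since no strongly connected tournament on $2$ vertices exists and $n=1$ is vacuous, and we may note that $T$ contains at least one directed cycle, since strong connectivity gives a directed path from any vertex back to itself. So let $C=v_1v_2\cdots v_kv_1$ be a longest directed cycle in $T$ and suppose, for contradiction, that $k<n$.

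The first step is a rotation/insertion observation about vertices off $C$: every $v\in V(T)\setminus V(C)$ satisfies either $(v_i,v)\in E(T)$ for all $i$, or $(v,v_i)\in E(T)$ for all $i$. Indeed, if $v$ had both an in-neighbour and an out-neighbour on $C$, then traversing $C$ we would find an index $i$ (mod $k$) with $(v_i,v),(v,v_{i+1})\in E(T)$; replacing the edge $v_iv_{i+1}$ by the path $v_i,v,v_{i+1}$ yields a directed cycle of length $k+1$, contradicting the maximality of $C$. Accordingly, partition $V(T)\setminus V(C)=A\cup B$, where $A=\{v:\ (v_i,v)\in E(T)\text{ for all }i\}$ and $B=\{v:\ (v,v_i)\in E(T)\text{ for all }i\}$; by assumption this set is nonempty.

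Now I would invoke strong connectivity twice more. Since $V(T)\setminus V(C)\neq\emptyset$ and $T$ is strongly connected, some edge leaves $V(C)$ and some edge enters $V(C)$; by the dichotomy these give $A\neq\emptyset$ and $B\neq\emptyset$. Finally, there must be an edge from $A$ to $B$: otherwise $B$ would dominate $A$, and then (using that $B$ dominates $C$ as well) no edge would enter $B$ from $V(C)\cup A$, so $B$ would be unreachable from $C$, contradicting strong connectivity. Choosing $a\in A$ and $b\in B$ with $(a,b)\in E(T)$, and using $(v_k,a)\in E(T)$ and $(b,v_1)\in E(T)$, the sequence $v_1,v_2,\ldots,v_k,a,b,v_1$ is a directed cycle of length $k+2$, again contradicting the maximality of $C$. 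Hence $k=n$ and $C$ is a Hamilton cycle, as claimed. The step that needs the most care is the rotation dichotomy, which is where maximality of $C$ is first exploited; after that, the three appeals to strong connectivity are routine but must be lined up correctly to yield $A\neq\emptyset$, $B\neq\emptyset$, and the $A$-to-$B$ edge.
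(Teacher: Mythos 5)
Your proof is correct and takes essentially the same route as the paper: both start from a longest directed cycle $C$, establish the rotation dichotomy (each vertex off $C$ either is dominated by all of $C$ or dominates all of $C$), and then derive a contradiction from strong connectivity. The one small variation is in how the second case is closed: the paper fixes a single vertex $v$ dominated by $C$, takes a shortest directed path from $v$ back to $C$ whose internal vertices avoid $C$, and splices that whole path into the cycle; you instead partition $V(T)\setminus V(C)$ into the two classes $A$ (dominated by $C$) and $B$ (dominating $C$), show both are nonempty, and produce an $A$-to-$B$ edge to splice in exactly two vertices. Both finishes are valid and of comparable length, so this is a cosmetic rather than a structural difference.
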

\begin{proof}
Let $C = u_1,u_2,\ldots u_r,u_1$ be a longest directed cycle in $T$.
If $C$ is not a Hamilton cycle, there is a vertex $v \notin C$.
Since $T$ is strongly connected, there is a path from $v$ to $C$ and
a path from $C$ to $v$. Suppose first that $(u_i,v),(v,u_j) \in
E(T)$ for some $1 \leq i \neq j \leq r$. Without loss of generality,
assume that $j>i$. Since $T$ is a tournament, there is some index $i
\leq k \leq j-1$ such that $(u_k,v), (v,u_{k+1}) \in E(T)$ and hence
we get a  longer cycle
$u_1,u_2,\ldots,u_k,v,u_{k+1},\ldots,u_r,u_1$, a contradiction.

If there are no two indices $i,j$ such that $(u_i,v),(v,u_j) \in
E(T)$, then all the edges between $v$ and $C$ are in the same
direction. Suppose that for every $1 \leq i \leq r$, we have
$(u_i,v) \in E(T)$ (the other case is similar). Since $T$ is
strongly connected, there is a path $v,x_1,\ldots,x_t,u_i$ for some
$1 \leq i \leq r$, where the vertices $x_1\ldots,x_t$ are not in
$C$. We therefore get a longer cycle
$u_1,\ldots,u_{i-1},v,x_1,\ldots,x_t,u_i,\ldots,u_r,u_1$, a
contradiction. Therefore $T$ contains a Hamilton cycle, as claimed.
\end{proof}

We conclude that if Maker constructs a strongly connected graph from
his own edges then he wins the game.

\paragraph{Breaker's strategy.}
Assuming that the bias is sufficiently large, Breaker has a strategy
to guarantee that the obtained tournament $T$ contains a vertex with
in-degree $0$. In this case clearly $T$ does not contain a Hamilton
cycle. To this end, we reduce this problem to a {\em box game},
similarly to the treatment in~\cite{CE78}.

Let $K_n$ be the complete graph on $n$ vertices, and consider the
$b$-biased game, where $b \geq \frac{(1+o(1))n}{\ln n}$. Recall that
$H_t$ is the oriented graph obtained after $t$ rounds. Fix a
partition $V(K_n) = A \cup B$, where $A$ and $B$ are disjoint sets,
$|A| = b, |B| = n-b$. Throughout the game, Breaker orients the edges
from $A$ to $B$ until after some round $t$ there are two vertices
$u,u' \in A$ such that for every vertex $w \in B$, both
$(u,w),(u',w) \in H_t$, and the in-degree of both $u,u'$ is $0$.
Then in the last turn he orients edges within $A$ so that either $u$
or $u'$ will have in-degree $0$.

We have the following well-known result of Chv\'{a}tal and
Erd\H{o}s~\cite{CE78}.
\begin{thm}
\label{thm:Chvatal-Erdos} Suppose that there are $r$ disjoint sets
(or boxes) $B_1,\ldots,B_r$, each box $B_i$ containing $k$ elements.
At each round, Box-Maker claims $b$ elements and then Box-Breaker
claims a single element. If
$$k \leq b\sum_{i=1}^{r} \frac{1}{i},$$ then
Maker has a strategy to occupy all the elements of a single box.
\end{thm}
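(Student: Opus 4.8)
The plan is to prove this by induction on the number of boxes $r$, exhibiting an explicit strategy for Box-Maker based on spreading his $b$ picks as evenly as possible among the boxes that are still \emph{alive} — i.e.\ that Box-Breaker has not yet claimed any element of. The guiding intuition is that one full round costs Box-Maker the right to win in (essentially) one box, namely the one Box-Breaker touches, so after each round the game should reduce to the same game with one fewer box and with the common box size decreased by roughly $b/r$.

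Concretely, I would set up the induction as follows. Call a box \emph{live} if Box-Breaker owns none of its elements, and for a live box let its \emph{residue} be the number of its elements not yet claimed by either player. The inductive statement is: if at the beginning of one of Box-Maker's turns there are $r$ live boxes, each of residue at most $b\sum_{i=1}^{r}1/i$, then Box-Maker wins. The base case $r=1$ is immediate: Box-Maker claims all $\le b$ remaining elements of the single live box. For the inductive step, if some live box already has residue at most $b$, Box-Maker claims it outright and wins; otherwise he distributes his $b$ picks among the $r$ live boxes so that each of them loses at least $\lfloor b/r\rfloor$ elements (possible since $r\lfloor b/r\rfloor\le b$), which brings every live box to residue at most $b\sum_{i=1}^{r}1/i-\lfloor b/r\rfloor\le b\sum_{i=1}^{r-1}1/i$. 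Box-Breaker then claims a single element, which can kill at most one live box, so at the start of Box-Maker's next turn at least $r-1$ live boxes remain, each of residue at most $b\sum_{i=1}^{r-1}1/i$, and the induction hypothesis (applied to any $r-1$ of them) finishes the job. Taking $r$ to be the number of boxes and using $k\le b\sum_{i=1}^{r}1/i$ yields the theorem.

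The step requiring the most care is the rounding hidden in ``each live box loses at least $b/r$ elements'': distributing $b$ indivisible picks among $r$ boxes only guarantees a loss of $\lfloor b/r\rfloor$ per box, so the argument as written literally establishes Box-Maker's win under the slightly stronger hypothesis $k\le\sum_{i=1}^{r}\lfloor b/i\rfloor$, which differs from $b\sum_{i=1}^{r}1/i$ by less than $r$; this floor version is in fact the one used, and in the Hamiltonicity application the $(1+o(1))$ slack in the bias comfortably absorbs the discrepancy. Two more cosmetic points must be checked: that Box-Maker never wastes picks on an already-dead box or on over-emptying a tiny box (handled by always distributing picks only among live boxes, and by grabbing any live box of residue at most $b$ immediately), and that any extra freedom for Box-Breaker — claiming a second element in a dead box, or the board running short of elements — can only help Box-Maker, so that the strategy above remains valid against arbitrary play.
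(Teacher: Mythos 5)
The paper does not prove Theorem~\ref{thm:Chvatal-Erdos}; it simply cites Chv\'atal and Erd\H{o}s~\cite{CE78}, so there is no internal proof to compare against. Judged on its own merits, your argument is a clean inductive scheme, but it does not establish the theorem as stated, and the discrepancy you flag at the end is a real gap rather than a cosmetic one.

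The inductive step asserts that after Box-Maker spreads his $b$ picks so that every one of the $r$ live boxes loses at least $\lfloor b/r\rfloor$ elements, each residue drops to at most $b\sum_{i=1}^{r}1/i-\lfloor b/r\rfloor\le b\sum_{i=1}^{r-1}1/i$. The second inequality needs $b/r\le\lfloor b/r\rfloor$, which fails whenever $r\nmid b$; as you note, what the induction actually delivers is a Box-Maker win when $k\le\sum_{i=1}^{r}\lfloor b/i\rfloor$. This is genuinely weaker than $k\le b\sum_{i=1}^{r}1/i$. For example with $r=4$, $b=3$ one has $b\sum_{i\le4}1/i=6.25$ but $\sum_{i\le4}\lfloor 3/i\rfloor=3+1+1+0=5$, and the theorem's prediction is correct: Box-Maker does win four boxes of size $6$ with bias $3$ (play $(1,1,1,0)$, then rebalance to keep the surviving residues as even as possible, e.g.\ turning the post-kill position $(5,5,6)$ into $(4,4,5)$), whereas your induction stops at $k=5$. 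The culprit is precisely the rounding: the even-distribution strategy throws away up to $r-1$ of Box-Maker's picks every round, and in fact for $r>b$ it throws away all of them. The standard argument instead spends all $b$ picks to equalize the residues of the surviving boxes (a max-decrease or ``water-filling'' allocation) and tracks the resulting state with a potential that absorbs the leftover $b-r\lfloor b/r\rfloor$ picks; this is exactly what recovers the full harmonic bound $bH_r$. Also, the remark that ``this floor version is in fact the one used'' does not match the paper: inequality~(\ref{eqn:condition-for-box-win}) and Claim~\ref{clm:Chvatal-Erdos-2-boxes} both invoke the $b\sum 1/i$ form. Your last observation is, however, correct and worth keeping: since $\sum_{i=1}^{r}\lfloor b/i\rfloor = bH_r-O(r)$ and the paper only needs the threshold up to a $(1+o(1))$ factor, the weaker floor version would still support the application in Section~\ref{sec:Hamiltonicity}; it just does not prove Theorem~\ref{thm:Chvatal-Erdos} as stated.
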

Note that in each round, Box-Breaker {\em destroys} a single box,
and so throughout the game Box-Maker tries to claim all elements of
a single box before it is destroyed by Box-Breaker.

Here we need a variant of this theorem, for the case that Box-Maker
actually has to complete two boxes.

\begin{clm}
\label{clm:Chvatal-Erdos-2-boxes} Suppose that there are $r$
disjoint sets, $B_1,\ldots B_r$, each $B_i$ containing $k$ elements.
At each round, Box-Breaker destroys one set and then Box-Maker
claims $b$ elements. If $$k+b \leq b\sum_{i=1}^{k} \frac{1}{i},$$
then Box-Maker has a strategy to occupy all the elements of two
boxes.
\end{clm}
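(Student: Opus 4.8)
The plan is to reduce the two-box problem to the one-box Chvátal–Erdős theorem (Theorem~\ref{thm:Chvatal-Erdos}) by running two phases: in the first phase Box-Maker grabs one box, and then in the second phase Box-Maker must grab a second, while being one move behind schedule because Box-Breaker, not Box-Maker, moves first in each round here.

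First I would set up the bookkeeping for the asymmetric move order. In this claim Box-Breaker destroys a box and \emph{then} Box-Maker claims $b$ elements; compared with Theorem~\ref{thm:Chvatal-Erdos} (where Box-Maker moves first), Box-Maker is effectively handicapped. The cleanest way to handle this is: Box-Maker ignores Box-Breaker's very first destruction (there are $r$ boxes, so after one destruction $r-1\ge$ enough boxes remain), and from then on pairs each of his own claiming moves with the \emph{next} destruction; this makes him effectively the first player on the surviving boxes. So after discarding one box, he is playing a $(b:1)$-game with Box-Maker first on $r-1$ boxes of size $k$, and he can finish one box provided $k \le b\sum_{i=1}^{r-1}\frac 1i$; since $r$ can be taken as large as we like (the hypothesis $k+b\le b\sum_{i=1}^k\frac 1i$ only constrains $r\ge k$ to make sense, and in the application $r$ is large), this is fine for the first box.

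The key step is the accounting for the \emph{second} box. After Box-Maker completes the first box, say at the end of round $t_1$, he has ``wasted'' effort: the box he completed took some number of his moves, and meanwhile Box-Breaker destroyed $t_1$ boxes. I would argue that the worst case for Box-Maker is when the first box he completes is completed as slowly as possible relative to Box-Breaker's destructions, i.e. he needs $\lceil k/b\rceil$ rounds and loses that many boxes to destruction, plus the one he discarded at the start. So entering the second phase he has $r - O(k/b) - 1$ boxes left and must complete one of size $k$ from scratch, again as the first mover, which he can do if $k \le b\sum_{i=1}^{r'}\frac 1i$ for the reduced count $r'$. The hypothesis $k+b \le b\sum_{i=1}^{k}\frac 1i$ is exactly what is needed to absorb the ``$+b$'' worth of lost rounds/boxes: the extra $b$ on the left corresponds to one full extra Box-Maker move (or equivalently one extra destroyed box) that must be paid for, and summing to $k$ rather than to $r$ is the conservative choice since $\sum_{i=1}^{k}\frac 1i \le \sum_{i=1}^{r}\frac 1i$ when $r\ge k$.

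The main obstacle I anticipate is making the phase-transition argument rigorous: when Box-Maker switches from ``finish box one'' to ``finish box two'', I must ensure that (a) the partially-claimed boxes from phase one are simply abandoned (Box-Maker never benefits from them, but crucially he is not hurt either, since abandoning a box is legal), and (b) the count of surviving untouched boxes at the start of phase two is large enough. I would handle (b) by a clean invariant: track the quantity $(\text{rounds remaining budget}) - (\text{elements still needed in the current target box})$ and show the hypothesis keeps it nonnegative across the switch, mirroring the potential-function proof of Theorem~\ref{thm:Chvatal-Erdos}. An alternative, perhaps slicker, route is to prove the claim directly by the Chvátal–Erdős potential/weight-function method applied to the union of \emph{two} target boxes simultaneously, defining Box-Maker's strategy to always claim in whichever of his two current favourite boxes has fewer remaining elements; then the single inequality $k+b\le b\sum_{i=1}^k\frac1i$ falls out of one weight computation rather than a two-phase case analysis. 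I would present the two-phase reduction as the main line since it reuses Theorem~\ref{thm:Chvatal-Erdos} as a black box, and remark that the direct method also works.
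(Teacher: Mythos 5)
Your two-phase reduction takes a genuinely different route from the paper, and it has a gap in the phase-one accounting. The paper's proof does something you didn't anticipate: it enlarges each box $B_i$ with a set $B_i'$ of $b$ \emph{virtual} items, so each enlarged box has $k+b$ elements, and applies Theorem~\ref{thm:Chvatal-Erdos} \emph{once} to these enlarged boxes (the hypothesis $k+b\leq b\sum 1/i$ is exactly what Theorem~\ref{thm:Chvatal-Erdos} needs for boxes of size $k+b$). Box-Maker's strategy is additionally required to exhaust the real elements of each $B_i$ before touching $B_i'$. Now consider the position just before Box-Breaker's final destruction, after which Box-Maker is guaranteed to complete an enlarged box: at that moment there must be \emph{two} enlarged boxes with at most $b$ unclaimed elements each, for otherwise Box-Breaker destroys the unique near-complete one and prolongs the game, contradicting that Box-Maker's win is forced at this point. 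In each of those two boxes the $\leq b$ unclaimed elements are all virtual (real ones were claimed first), so both original boxes $B_i, B_j$ are fully claimed. This single application of the theorem simultaneously absorbs the reversed move order and the ``two boxes'' requirement, and never needs to estimate how long the game lasts.

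Your argument, by contrast, needs a bound on how many rounds phase one consumes, and that bound is not available. You assert the first box is completed in about $\lceil k/b\rceil$ rounds and hence only $O(k/b)$ boxes are lost before phase two begins, but Theorem~\ref{thm:Chvatal-Erdos} gives no such guarantee: the Chv\'{a}tal--Erd\H{o}s balancing strategy spreads claims across many boxes, and in the near-critical regime $k\approx b\sum_{i=1}^r 1/i$ the first completed box can arrive only after $\Theta(r)$ rounds, by which point the stock of surviving boxes for phase two may be nearly exhausted. So the premise that ``$r-O(k/b)-1$ boxes remain for phase two'' is unjustified, and the attempt to charge the extra ``$+b$'' in the hypothesis to one lost move/box does not match the true bookkeeping. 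Your proposed alternative---rerunning the weight-function argument with two simultaneous target boxes---is plausible and closer in spirit to a correct direct proof, but you do not carry it out; the paper's virtual-items device is the clean way to sidestep the entire difficulty while still using Theorem~\ref{thm:Chvatal-Erdos} as a black box.
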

\begin{proof}
For every box $B_i$ we add a set $B'_i$ of $b$ virtual items.
Consider a standard box game where the $i$'th box is $B_i \cup
B'_i$, and suppose that Box-Maker always claim the elements of $B_i$
before he claims the elements of $B'_i$, for every $1 \leq i \leq
r$. If
$$k+b \leq b\sum_{i=1}^{r} \frac{1}{i},$$ then
by Theorem~\ref{thm:Chvatal-Erdos} Box-Maker has a strategy to win
the game. Consider the last round before Box-Maker wins, when the
next move should be taken by Box-Breaker. Since Box-Breaker cannot
avoid Box-Maker's win there are at least two indices $i \neq j$ such
that all but at most $b$ elements of boxes $i$ and $j$ are already
claimed by Box-Maker. Therefore we conclude that there are at least
two indices $i \neq j$ such that $B_i$ and $B_j$ are claimed. We
conclude that Box-Maker claimed all the elements of two of the
original boxes, no matter what Box-Breaker did. The claim follows.
\end{proof}

In our setting, Maker and Breaker switch their roles. That is, we
define the boxes so that Breaker will take Box-Maker's role, and if
he claims a box the obtained tournament has a vertex of in-degree
$0$. For every vertex $v \in A$ we define a box $X_v$ as $\{vw : w
\in B\}$. Note that $|X_v|=|B|=n-b$. In every turn, Maker (that is,
Box-Breaker) can destroy one box $X_v$ by directing an edge towards
$v$, either from a vertex from $A$ or from $B$. On the other hand,
Breaker (Box-Maker) can orient $b$ edges from $A$ to $B$, which is
equivalent to taking $b$ elements from the various boxes. By
Claim~\ref{clm:Chvatal-Erdos-2-boxes}, if
\begin{equation}
\label{eqn:condition-for-box-win} n = |X_v|+b \leq b\sum_{i=1}^{|A|}
\frac{1}{i},
\end{equation}
then Breaker has a strategy to have two vertices $u,u'$ from $A$ for
which all their incident edges that connect them to $B$ are directed
towards $B$, and none of the edges from $A$ enters $u$ or $u'$.
Therefore, no matter what Maker does, Breaker can direct all the
edges from either $u$ or $u'$, thus creating a vertex with in-degree
$0$ and destroying any chance for creating a Hamilton cycle. Taking
$b \geq \frac{n(1+o(1))}{\ln n}$ satisfies
(\ref{eqn:condition-for-box-win}) and thus Breaker wins the game,
and thus Item~\ref{item:hamilton-upper-bound} in
Theorem~\ref{thm:create-hamilton} follows.

\paragraph{Maker's strategy.}

Maker's strategy consists of two stages. His goal in the first stage
is to create a graph with some expansion properties, so that all
sufficiently small sets have at least one in-going edge and at least
one out-going edge. To this end, he will create a graph with min
in-degree and out-degree at least $3$. We will show that with
positive probability (and actually, with high probability) after
this stage the graph has the desired expansion properties. Since the
game considered is a perfect information game with no chance moves,
we conclude that Maker has a \textbf{deterministic} strategy that
guarantees these properties after the first stage. Moreover, the
first stage lasts at most $8n$ rounds in any case.

At the second stage, Maker will ensure that for every large enough
disjoint sets of vertices $A,B$ there is at least one edge from $A$
to $B$ and at least one edge from $B$ to $A$. We will show that if
the he succeeds at the first stage then after the second stage we
will have a strongly connected graph and hence by the end of the
game Maker will win.

We say that a directed graph $G$ is {\em $k$-expanding} if the
following holds.
\begin{itemize}
\item For every set $A$ of size at most $k$, $|N^{+}(A)|,|N^{-}(A)|
> 0$.
\item For every two disjoint sets $A,B$ of size at least $k$, there is an
edge from $A$ to $B$ and there is an edge from $B$ to $A$.
\end{itemize}

We will show that after the first stage the obtained graph will have
the first property with high probability, and after the second stage
it will have the second property.

We have the following.
\begin{lem}
\label{lem:expanders-are-strongly-connected} Let $G$ be a directed
graph, and suppose that $G$ is $k$-expanding for some $k$. Then $G$
is strongly connected.
\end{lem}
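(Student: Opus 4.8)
The plan is to prove the statement by contradiction, using the standard reachability argument. Suppose $G=(V,E)$ is $k$-expanding but not strongly connected. Then there are vertices $u,v$ such that no directed path leads from $u$ to $v$. Let $S$ be the set of all vertices reachable from $u$ by a directed path, with the convention $u\in S$. Then $S\neq\emptyset$ and $v\notin S$, so $T:=V\setminus S$ is also nonempty; moreover, by maximality of reachability there is no edge from $S$ to $T$. In the notation of the preliminaries this says exactly that $N^{+}(S)=\emptyset$, and equivalently $N^{-}(T)=\emptyset$.

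Next I would split into three exhaustive cases according to the sizes of $S$ and $T$. If $|S|\le k$, the first clause of $k$-expansion forces $|N^{+}(S)|>0$, contradicting $N^{+}(S)=\emptyset$. Symmetrically, if $|T|\le k$, the same clause applied to $T$ forces $|N^{-}(T)|>0$, again a contradiction. The only remaining possibility is $|S|>k$ and $|T|>k$; then $S$ and $T$ are disjoint sets each of size at least $k$, so the second clause of $k$-expansion yields an edge from $S$ to $T$, once more contradicting the choice of $S$. Hence no such pair $u,v$ can exist, i.e.\ $G$ is strongly connected. (Equivalently, one could phrase the same argument via the condensation DAG: a non-strongly-connected $G$ has a source strongly connected component $C$, and then $S=V\setminus C$ has no out-going edges; the case analysis on $|C|$ versus $|S|$ is identical.)

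There is no genuinely hard step here; the only point requiring a little care is to check that the three cases are exhaustive and that $S$ and $T$ are nonempty \emph{proper} subsets of $V$, so that the expansion clauses genuinely apply — in particular so that we never invoke the first clause for the whole vertex set $V$, for which $N^{+}(V)=\emptyset$ holds trivially. Both facts are immediate from $u\in S$ and $v\in T$.
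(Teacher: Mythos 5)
Your proof is correct and follows essentially the same route as the paper: in both cases one produces a nonempty proper subset $S\subsetneq V$ with no out-going edges (you take the reachability set of a vertex $u$, the paper takes a sink component of the condensation DAG) and then runs the identical three-way case analysis, applying the first clause of $k$-expansion to $S$ when $|S|\le k$, to $V\setminus S$ when $|V\setminus S|\le k$, and the second clause otherwise. Your closing remark about the condensation already identifies the correspondence, and your care about $S$ and $V\setminus S$ being nonempty proper subsets is the right thing to check.
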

\begin{proof}
Let $A_1,A_2,\ldots,A_t$ be the strongly connected components of
$G$, and suppose that $t>1$. Let $T$ be a graph where each $A_i$ is
represented by a vertex, and there is an edge from $A_i$ to $A_j$ if
and only if there is a vertex $v_i \in A_i$ and a vertex $v_j \in
A_j$ such that $(v_i,v_j) \in E(G)$. It is well known that $T$ is a
directed forest, and therefore contains a leaf, i.e., a set $A_i$
with no outgoing edges. If $|A_i| < k$ then since $|N^{+}(A_i)| > 0$
we get a contradiction. If $|A_i| > n-k$, then since $|N^-(V
\setminus A_i)| > 0$ we get a contradiction. Finally, if $k \leq
|A_i| \leq n-k$, then by the second property there is an edge from
$A_i$ to $V \setminus A_i$. Therefore, we conclude that $t=1$ and
hence $G$ is strongly connected.
\end{proof}

More specifically, we will show that for $k
=\frac{n}{(\ln{n})^{2/5}}$, at the first stage Maker ensures that
for every set $A$ of size at least $k$, $|N^{+}(A)|, |N^-(A)|>0$,
and at the second stage Maker ensures that for every two sets $A,B$
of size at least $k$, there is an edge from $A$ to $B$. By
Lemma~\ref{lem:expanders-are-strongly-connected} and
Lemma~\ref{lem:strong-conn-implies-hamiltonicity}, after the second
stage Maker wins.

\paragraph{The first stage.} At the first stage we adapt the techniques of Gebauer and
Szab\'{o}~\cite{GS09} in a way similar to~\cite{Krivelevich11} and
show that if $b = \frac{(1-o(1))n}{\ln n}$ then Maker has a winning
strategy. We start by reducing our game to an undirected game on the
edges of a bipartite graph.

Suppose that Maker and Breaker play a biased orientation game on the
edges of the complete graph $G=(V,E)$ on $n$ vertices, and let
$V=\{v_1,v_2,\ldots ,v_n\}$. Let $H=(V_1,V_2,E')$ be the complete
bipartite graph on $2n$ vertices, where $V_1 = \{v_{1,1}, v_{1,2},
\ldots ,v_{1,n} \}$ and $V_2 = \{v_{2,1}, v_{2,2}, \ldots, v_{2,n}
\}$. Throughout the game we maintain two subgraphs, $H_M$ consisting
of edges that are associated with Maker and $H_B$ consisting of
edges that are associated with Breaker. Initially both graphs are
empty.

If Breaker orients a previously undirected edge from $v_i$ to $v_j$
in $G$, we add the edge between $v_{2,i}$ and $v_{1,j}$ to $H_B$.

Maker, in his turn, would like to create a graph with a constant
minimum degree in $H_M$. Whenever Maker, according to the strategy
to be described below, wants to add some edge $(v_{1,i},v_{2,j})$ to
$H_M$ and $(v_{2,i},v_{1,j})$ has not been taken yet, he does it and
also orients $v_i$ to $v_j$ in $G$ (note that in this case the edge
between $v_i$ and $v_j$ is undirected before this step). In this
case we also add the edge $(v_{2,i},v_{1,j})$ to $H_B$. If, on the
other hand, $(v_{2,i},v_{1,j}) \in E(H_B)$, then he adds
$(v_{1,i},v_{2,j})$ to $H_M$, and then plays another turn by taking
a free edge according to his strategy, and adding the opposite edge
to Breaker's graph. Finally, if Maker takes an edge
$(v_{1,i},v_{2,i})$ then he plays another turn. Since the classical
Maker-Breaker game is bias-monotone, if Maker takes more than one
edge it can only help him. Also, note that edges from $v_{1,i}$ to
$v_{2,i}$ are useless for Maker in the real game. Therefore Maker
will have to construct a graph with minimum degree $c+1$ so that
every vertex has at least $c$ neighbors other than himself.

Observe also that $(v_{2,i},v_{1,j}) \notin E(H_M)$, as otherwise
$(v_{1,i},v_{2,j})$ would be added to $H_B$ in some previous step.
The following proposition summarizes this reduction.

\begin{prop}
\label{prop:reduction-to-bipartite-graphs} If at some step $H_M$ has
minimum degree $c+1$ then at the same time every vertex in $G$ has
min in-degree and out-degree at least $c$.
\end{prop}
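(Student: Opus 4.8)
The plan is to set up a one-way dictionary sending edges of $H_M$ to directed edges of $G$, and then finish the proposition by a bipartite degree count. First I would record the bookkeeping fact that an edge of the form $(v_{2,a},v_{1,b})$ can only enter $H_B$ as the image of a genuine orientation $v_a\to v_b$ of $G$: by the reduction such an edge is inserted into $H_B$ precisely when Breaker directly orients $v_a\to v_b$, or when Maker adds $(v_{1,a},v_{2,b})$ to $H_M$ and, as a side effect, orients $v_a\to v_b$ in $G$; in both cases $v_a\to v_b$ holds in $G$. The dictionary I actually want is the analogous statement for $H_M$: if $i\ne j$ and $(v_{1,i},v_{2,j})\in E(H_M)$, then $v_i\to v_j$ in $G$. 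To see this I would look at the round in which Maker placed that edge into $H_M$; by the case analysis of the reduction (using the observation $(v_{2,i},v_{1,j})\notin E(H_M)$ to rule out any other possibility) at that moment either the reverse edge $(v_{2,i},v_{1,j})$ was still unclaimed, so Maker simultaneously oriented $v_i\to v_j$ in $G$, or $(v_{2,i},v_{1,j})$ already belonged to $H_B$, in which case the fact just recorded again gives $v_i\to v_j$ in $G$.

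With the dictionary in hand the proposition falls out. Fix a vertex $v_i$ of $G$ and assume $H_M$ has minimum degree $c+1$. Since $H_M$ is a subgraph of the bipartite graph $H=(V_1,V_2,E')$, every neighbor of $v_{1,i}$ in $H_M$ has the form $v_{2,j}$, and there are at least $c+1$ of them; at most one is the diagonal vertex $v_{2,i}$, which carries no information about $G$, so at least $c$ distinct indices $j\ne i$ satisfy $(v_{1,i},v_{2,j})\in E(H_M)$, and the dictionary turns each into a distinct out-neighbor $v_j$ of $v_i$ in $G$. Hence $v_i$ has out-degree at least $c$ in $G$. Symmetrically, the at least $c+1$ neighbors of $v_{2,i}$ in $H_M$ have the form $v_{1,j}$, at most one with $j=i$, so at least $c$ indices $j\ne i$ give $(v_{1,j},v_{2,i})\in E(H_M)$ and hence $v_j\to v_i$ in $G$, so $v_i$ has in-degree at least $c$ in $G$. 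As $v_i$ was arbitrary, we are done.

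The whole weight of the argument rests on the dictionary lemma, so the one thing I would be careful to state correctly is its one-directional character --- an $H_M$-edge forces an orientation of $G$ but not conversely --- together with the need to discard, at each vertex of $H$, the at most one diagonal edge $(v_{1,i},v_{2,i})$, which is exactly a ``play another turn'' move of Maker and reflects no edge of $G$. Beyond that no inequalities are involved; it is simply counting neighbors in a bipartite graph.
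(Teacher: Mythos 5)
Your proof is correct, and since the paper states this proposition without a separate proof (treating it as a summary of the reduction just described), your argument simply makes explicit the intended reasoning. The key points you isolate --- that $(v_{1,i},v_{2,j})\in E(H_M)$ with $i\neq j$ forces $v_i\to v_j$ in $G$ (either because Maker oriented it himself or because the companion edge $(v_{2,i},v_{1,j})$ was already in $H_B$, which happens only when $v_i\to v_j$ is already present), and that at most one incident edge of each $H_M$-vertex is a diagonal $(v_{1,i},v_{2,i})$ and hence must be discarded --- are exactly what the reduction relies on, and the bipartite degree count at $v_{1,i}$ and $v_{2,i}$ then gives out-degree and in-degree at least $c$ respectively.
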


\paragraph{Gebauer-Szab\'{o} proof.} In~\cite{GS09}, Gebauer and
Szab\'{o} provided a strategy for Maker (in the classical
Maker-Breaker setting) to construct a spanning tree, a graph with
postive minimum degree, and a connected graph with high minimum
degree when $b = \frac{(1+o(1))n}{\ln n}$. Here we summarize their
method and highlight the slight differences between their strategy
for the min-degree game and what we need in our case. We refer the
reader to~\cite{GS09} for a complete proof. Their strategy is
defined as follows. The goal of Maker is to construct a graph with
min-degree $c$. Throughout the game, a vertex $v$ is {\em dangerous}
if $d_M(v) \leq c-1$. Define the danger value of $v$ as $\danger(v)
= d_B(v)-2b\cdot d_M(v)$. Initially, the danger of all vertices is
$0$. At every round, Maker takes a vertex $v$ with maximum danger
value (ties are broken arbitrarily), and then takes an arbitrary
unclaimed edge incident to $v$. The proof goes by assuming a
Breaker's win, and analyzing the change of danger value of the
vertices for which Maker took incident edges in the game, and
showing that the average danger value must be greater than $0$. This
in turn would lead to a contradiction.

In our case, our board consists of the edges of the complete
bipartite graph $K_{n,n}$ instead of the edges of the complete graph
$K_n$. Moreover, when Maker claims an edge, Breaker may get the
opposite edge as well; We add this edge to the next move of Breaker.
Therefore, Maker plays against Breaker that claims at most $(b+1)$
edges in his turn. The danger of a vertex is defined only with
respect to edges (and degrees) that belong to the bipartite graph
$K_{n,n}$, and hence at the beginning of the game the danger of
every vertex is $0$. The rest of the analysis is essentially the
same as~\cite{GS09}.

It was observed in~\cite{Krivelevich11} that Maker can achieve the
minimum degree $c$ at every vertex before Breaker claimed
$(1-\delta)n$ of its incident edges, for $\delta =
\frac{15}{(\ln{n})^{1/4}}$. Also, if Maker claims an edge that is
incident to a vertex $v$, he chooses one of the edges randomly and
uniformly among the free incident edges. Note this in this case
Breaker also gets only one new edge.

We will show that after the first stage, the obtained graph has
typically some expanding properties. In our case after at most $8n$
rounds, $H_M$ has min-degree at least $4$, which results in an
oriented graph with the property that every vertex has in-degree and
out-degree at least $3$. Observe that this stage lasts at most $8n$
moves as in every round Maker increases the degree of one of the
vertices in $K_{n,n}$ by at least one.

We conclude the description of this approach with the following
proposition.
\begin{prop}
\label{prop:Gebauer-Szabo-app} Suppose that $b = \frac{(1-o(1))\cdot
n}{\ln{n}}$. Then Maker has a strategy to construct after at most
$8n$ turns a directed graph with min in-degree and min out-degree at
least $4$. Moreover, throughout the game, Maker chooses at each turn
a vertex $v$ according to his strategy, and picks a random incident
edge out of a set of at least $\delta n$ choices, where $\delta =
\frac{15}{(\ln n)^{1/4}}$.
\end{prop}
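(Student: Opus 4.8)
The plan is to derive Proposition~\ref{prop:Gebauer-Szabo-app} by composing two ingredients that are already in place: the reduction of the directed degree game on $K_n$ to an (undirected, biased) Maker--Breaker minimum-degree game on the bipartite board $K_{n,n}$ (Proposition~\ref{prop:reduction-to-bipartite-graphs}), and the Gebauer-Szab\'{o} minimum-degree strategy of~\cite{GS09}, quantitatively sharpened as in~\cite{Krivelevich11}. Fix a constant $c$ large enough that minimum degree $c$ in $H_M$ forces, via Proposition~\ref{prop:reduction-to-bipartite-graphs}, the oriented graph on $V(K_n)$ to have the min in-degree and out-degree asserted in the statement (it suffices to take $c$ one larger than that value). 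By the bookkeeping set up above, playing Maker's side of the orientation game is then the same as playing Maker's side of the biased minimum-degree game on $K_{n,n}$ against a Breaker who claims up to $b+1$ edges per round --- the surplus being the opposite edge Maker is forced to cede, together with Maker's occasional extra free move, which by bias monotonicity only helps him.

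The core of the argument is the danger-potential analysis. As in~\cite{GS09}, call a vertex of $K_{n,n}$ \emph{dangerous} as long as $d_M(v)<c$, attach to it its danger value $\danger(v)$ defined exactly as in the min-degree game described above but with Breaker's bias taken to be $b+1$, and let Maker always move at a dangerous vertex of maximum danger, claiming an arbitrary free incident edge there. Suppose for contradiction that Breaker wins, i.e.\ some vertex stays dangerous until the board is exhausted. Then one reruns the Gebauer-Szab\'{o} accounting essentially verbatim: one tracks the danger values of the vertices at which Maker played and shows that the maximum-danger rule forces their average danger to be strictly positive, which is impossible because every vertex starts at danger $0$. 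The only changes relative to~\cite{GS09} are cosmetic: the board is bipartite, so every degree is read in $K_{n,n}$ (in particular the initial danger is genuinely $0$), and Breaker's effective per-round budget is $b+1$ in place of $b$; since $b+1=\frac{(1-o(1))n}{\ln n}$, the numerical hypothesis the argument needs is still met for any fixed $c$ once $n$ is large. Hence Maker forces $H_M$ to have minimum degree $c$, and Proposition~\ref{prop:reduction-to-bipartite-graphs} gives the asserted in- and out-degree bounds.

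Two side conditions remain. For the round bound, note that in every round Maker raises the $K_{n,n}$-degree of some vertex by at least one, and Maker is done once all $2n$ vertices have reached the target $H_M$-degree; since that target is an absolute constant, Maker finishes in $O(n)$ rounds --- at most $8n$ for the constant used here. For the random-choice clause, invoke the refinement of~\cite{Krivelevich11}: Maker reaches the target minimum degree at a vertex $v$ before Breaker has claimed $(1-\delta)n$ of the $n$ edges of $K_{n,n}$ incident to $v$, where $\delta=\frac{15}{(\ln n)^{1/4}}$. Therefore, whenever Maker plays at a still-dangerous vertex $v$, the number of free edges incident to $v$ is at least $n-(1-\delta)n-d_M(v)\ge \delta n-c=(1-o(1))\delta n$. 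Since the danger analysis above is completely insensitive to \emph{which} free incident edge Maker selects, Maker may as well select one uniformly at random from this set of at least $\delta n$ candidates (absorbing the lower-order term into the constant), which is exactly the extra structure the expansion argument of the next subsection will exploit.

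The step I expect to be the main obstacle is making sure the two modifications really leave the Gebauer-Szab\'{o} potential argument intact: that folding Maker's forced opposite edge and his occasional double move into the single statement ``Breaker claims $\le b+1$ per round'' is charged consistently through the inductive danger estimates, and that passing to the bipartite board introduces no hidden endpoint or parity obstruction. Granting this --- and it is essentially what~\cite{GS09} and~\cite{Krivelevich11} already prove --- the remainder is routine: the reduction is in hand, the round count is a one-line degree-sum estimate, and the ``at least $\delta n$ free edges'' point is immediate from the cited refinement.
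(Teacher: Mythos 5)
Your proposal is correct and follows essentially the same route as the paper: reduce via Proposition~\ref{prop:reduction-to-bipartite-graphs} to the biased minimum-degree game on $K_{n,n}$, run the Gebauer--Szab\'{o} danger-potential argument against a Breaker with effective bias $b+1$, and invoke the quantitative refinement from~\cite{Krivelevich11} to guarantee at least $\delta n$ free incident edges at each of Maker's moves. The only difference is that the paper's appendix reproduces the danger-accounting calculation (Lemmas~\ref{lem:makermindeg}--\ref{lem:breakermindeg} and the two inequality chains) explicitly, whereas you delegate it to the citations after correctly identifying the two modifications (bipartite board, bias $b+1$) and noting they are benign.
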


For completeness we provide the proof details in the appendix.

\paragraph{Applying Gebauer-Szab\'{o} approach.}

Let $A$ be a set of vertices of size $O(\frac{n}{(\ln{n})^{2/5}})$.
We next prove that almost surely after the first stage $A$ has at
least one ingoing edge and at least one outgoing edge. We start by
claiming that almost surely every such set has at least one ingoing
edge. Observe first that the property trivially holds for every set
with a single vertex, as every vertex has in-degree at least one.
Consider a fixed set $A$ of size $i$, and assume that $A$ has no
ingoing edges, then all edges that enter $A$ have their other
endpoint also in $A$, and there are at least $3i$ such edges. By
Proposition~\ref{prop:Gebauer-Szabo-app}, whenever Maker chooses a
dangerous vertex $v$ from $A$, there are at least $\delta n$
unclaimed edges incident to $v$. Therefore, the probability that
Maker chooses an edge between $v$ and another vertex of $A$ is at
most $\frac{|A|-1}{\delta n -1}$. After the first stage there are
$3i$ ingoing edges to vertices of $A$, hence the probability that
$A$ does not have even a single ingoing edge from a vertex outside
$A$ is at most $(\frac{|A|-1}{\delta n -1})^{3i}$. Therefore, by the
union bound, the probability that there is set $A$ of size $i$ with
no ingoing edge is at most
$$
{n \choose i} \cdot \left(\frac{|A|-1}{\delta n -1}\right)^{3i} \leq
\left(\frac{en}{i}\right)^i \cdot \left(\frac{2i}{\delta
n}\right)^{3i} \leq \left(\frac{8ei^2}{\delta^3 n^2}\right)^i.
$$

By considering the two cases when $i \leq n^{1/3}$ and $i \geq
n^{1/3}$ it is easy to check that for every $2 \leq i \leq \frac{
n}{(\ln{n})^{2/5}}$ and $\delta = \frac{15}{(\ln{n})^{1/4}}$, the
last expression is bounded by $o(1/n)$. Therefore by the union bound
every set of size at most $\frac{n}{(\ln{n})^{2/5}}$ has at least
one ingoing edge, assuming that $n$ is sufficiently large.
Essentially the same argument shows that almost surely every such
set of that size contains at least one outgoing edge, as claimed.

Clearly, the first stage takes at most $8n$ rounds, so the total
number of taken edges is at most $8n(b+1)$.

\paragraph{The second stage.}

Recall that at the second stage Maker has to connect in both
directions every two disjoint sets $A,B$ of size $\frac{(1-o(1))
n}{(\ln{n})^{2/5}}$.

Consider a random tournament obtained from $K_n$ by directing each
edge uniformly and independently of the other edges. For every two
disjoint sets of vertices $A,B$, the number of edges from $A$ to $B$
is binomially distributed. Denote by $e(A,B)$ the number of edges
from $A$ to $B$ and by $e(B,A)$ the number of edges from $B$ to $A$.
By the Chernoff bound (see, e.g.,~\cite{AS08}), we have
$$ Pr\left[\left|e(A,B)-\frac{|A||B|}{2}\right| \geq \eps|A||B|\right] \leq e^{-\eps^2
|A||B|/2},$$ and similar inequality holds also for $e(B,A)$.

Therefore, if $|A|=|B|=\frac{(1-o(1)) n}{(\ln{n})^{2/5}}$ the
probability that $e(A,B)$ or $e(B,A)$ is greater than $\tfrac{1}{2}
\cdot |A||B|(1+n^{-1/2+o(1)})$ is $2^{-2n}$, and hence by the union
bound for every two such sets, there are at least $\tfrac{1}{2}
\cdot |A||B|(1+n^{-1/2+o(1)}) \geq \frac{0.99 n^2}{2(\ln{n})^{4/5}}$
in each direction. Fix a tournament $T^*$ with this property.

At the second stage, Maker always directs edges that agree with
$T^*$. That is, he can only direct an edge from $u$ to $v$ if $(u,v)
\in E(T^*)$. For every two such sets, at most $8n(b+1) \leq
\frac{12n^2}{\ln{n}}$ edges were directed at the first stage of the
game, and hence at the beginning of the second stage at least
$\frac{0.99 n^2}{2(\ln{n})^{4/5}}$ edges that are directed from $A$
to $B$ in $T^{*}$ are unclaimed.

Now Maker and Breaker switch roles. Maker clearly wins if he
prevents Breaker from claiming all the edges from a set $A$ to a set
$B$, where $|A|=|B| = k = \frac{n}{(\ln{n})^{2/5}}$. To this end, we
apply the Beck-Erd\H{o}s-Selfridge criteria
(Theorem~\ref{thm:Erdos-selfride-beck}), with
$p=b=\frac{n(1+o(1))}{\ln{n}}$, $q=1$, the size of each hyperedge is
at least $\frac{0.99 n^2}{2(\ln{n})^{4/5}}$ and the total number of
sets is at most ${n \choose k}^2$. We have

\begin{align*}
\sum_{A \in \mathcal F} (q+1)^{-\frac{|A|}{p}} & <   {n \choose k}^2 \cdot (q+1)^{-\frac{|A|}{p}}  \\
& < \left(\frac{en}{k}\right)^{2k} \cdot 2^{-\frac{n \ln{n}}{3(\ln{n})^{4/5}}}  \\
& \leq 2^{\frac{4n \cdot \log{\log{n}}}{(\ln{n})^{2/5}}} \cdot
2^{-\frac{n (\log{n})^{1/5}}{6}} \ll 1.
\end{align*}

Therefore in our case Maker wins and hence every two sets of size
$k$ are connected in both ways.

We conclude that at the end of the second stage Maker has a strongly
connected graph, and hence by the end of the game the obtained
tournament is strongly connected, and Maker wins. This proves
Item~\ref{item:hamilton-lower-bound} in
Theorem~\ref{thm:create-hamilton}. \qed

\section{The $H$-creation game}
\label{sec:h-creation}

In this game, a fixed oriented graph $H$ is given. Maker wins if the
obtained tournament contains a copy of $H$, and Breaker wins
otherwise. Note that if $H$ does not contain a directed cycle then
Maker surely wins for large enough $n$, as every tournament of size
$n$ contains a transitive tournament of size $\log{n}$, which
contains $H$ as a subgraph.

Our starting point is an upper bound on the bias threshold for a
general fixed graph $H$. Given a directed graph $H$, and a bijection
$\sigma : V(H) \to |V(H)|$, we define the {\em feedback arc set} of
$H$ with respect to $\sigma$ as
$$\FAS(H,\sigma) = |\{(u,v) \in E(H) : \sigma(u)>\sigma(v)\}|.$$
In words, this parameter measures the number of edges that are going
in the wrong direction with respect to $\sigma$. Let
$$
\FAS(H) = \min_{\sigma} \{\FAS(H,\sigma)\}.
$$
This is the minimal number of edges of $H$ that has to be deleted in
order to make $H$ an acyclic graph. If, for example, $H$ is a random
tournament on $t$ vertices, then it is easy to show that with high
probability $\FAS(H)$ is close to $t(t-1)/4$.

We have the following upper bound.
\begin{lem}
Let $H$ be a graph on $t$ vertices, and let $r = \FAS(H)$. Suppose
that Maker and Breaker play an orientation game on $K_n$. Then if
$b>c(H) \cdot n^{t/r}$ then Breaker has a strategy guaranteeing that
the obtained tournament does not contain a copy of $H$, where
$c(H)>0$ depends only on $H$.
\label{lem:H-creating-upper-bound-with-FAS}
\end{lem}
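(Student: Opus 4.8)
The plan is to reduce the $H$-creation game to a biased Maker--Breaker game on an auxiliary hypergraph and then apply the Erd\H{o}s--Selfridge--Beck criterion (Theorem~\ref{thm:Erdos-selfride-beck}). For every injection $\phi\colon V(H)\to V(K_n)$ record the set $E_\phi$ of $|E(H)|$ pairs of $K_n$ that must receive the ``$\phi$-correct'' orientation in order for $\phi$ to yield a copy of $H$; there are at most $n^t$ such injections, so at most $n^t$ sets $E_\phi$. The first elementary ingredient is that the obtained tournament contains a copy of $H$ if and only if for some $\phi$ all pairs of $E_\phi$ are oriented $\phi$-correctly. The second ingredient is a bound on $|E_\phi|$: since every directed graph has an acyclic subgraph containing at least half of its edges, $|E(H)|\ge 2\,\FAS(H)=2r\ge r+1$ (using $r\ge 1$, which holds because an $H$ without a directed cycle is trivially created). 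Hence every ``hyperedge'' $E_\phi$ has size at least $r+1$.

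With this in hand the quantitative estimate is routine. Treating the $\binom n2$ pairs of $K_n$ as the board, with $p=1$, $q=b$, Maker moving first, and the family $\{E_\phi\}$ as the hyperedge set, the sum in Theorem~\ref{thm:Erdos-selfride-beck} is
\[
\sum_{\phi}(b+1)^{-|E_\phi|}\ \le\ n^{t}\,(b+1)^{-(r+1)},
\]
which is below $\tfrac1{b+1}$ as soon as $(b+1)^{r}>n^{t}$, i.e.\ $b+1>n^{t/r}$. Choosing $c(H)$ large enough to absorb the constants lost in comparing the orientation game with the auxiliary Maker--Breaker game (and the harmless replacement of $n(n-1)\cdots(n-t+1)$ by $n^{t}$) then yields the stated threshold $b>c(H)\,n^{t/r}$; note that the sharper estimate $|E(H)|\ge 2r$ actually gives the better exponent $t/(2r-1)$, which a fortiori implies the claim.

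The step I expect to be the main obstacle is that the $H$-creation game is \emph{not} literally a Maker--Breaker game: a pair of some $E_\phi$ may be oriented $\phi$-correctly by \emph{Breaker} when he is forced to direct that edge, so a Breaker strategy that merely prevents Maker from claiming a whole $E_\phi$ need not prevent a copy of $H$. To deal with this I would have Breaker combine two constraints: follow the Erd\H{o}s--Selfridge--Beck blocking strategy for the family $\{E_\phi\}$, and, among the (at most $b$) pairs he must orient each round, always choose directions so as not to complete a copy of $H$ together with the arcs already present. One must then argue that these two requirements are simultaneously satisfiable --- for instance by showing that at any moment only boundedly many pairs are ``critical'' for a surviving copy, while the potential bound from Theorem~\ref{thm:Erdos-selfride-beck} prevents Maker from turning a copy critical on his single move before Breaker can kill it. Making this compatibility argument precise, and carefully tracking which copies remain alive, is where the real work of the proof lies; everything else is the elementary counting above.
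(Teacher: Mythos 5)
You have correctly diagnosed the central obstacle---the $H$-creation game is \emph{not} a Maker--Breaker game on the family $\{E_\phi\}$, because Breaker himself must orient edges and may thereby complete a copy of $H$---but you have not closed the gap, and the patch you sketch does not obviously work. There is no clean sense in which ``only boundedly many pairs are critical,'' and the Erd\H{o}s--Selfridge--Beck potential argument controls only which elements \emph{Maker} claims; it says nothing about which directions Breaker is forced to pick when he orients. In fact, for a single pair $\{u,v\}$ both orientations might simultaneously complete different copies of $H$, so Breaker cannot simply ``always choose the harmless direction.'' Your proposal is therefore incomplete precisely at the step you flag as ``where the real work of the proof lies.''

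The paper's proof supplies the missing idea, and it is quite different from anything in your sketch. Breaker fixes once and for all a bijection $\sigma:V(K_n)\to[n]$ and, whenever he orients a pair $\{u,v\}$, he always directs it in the $\sigma$-increasing direction. Then \emph{every} edge that points against $\sigma$ in the final tournament was oriented by Maker. Since any embedded copy of $H$, under any placement, contains at least $r=\FAS(H)$ edges pointing against the induced ordering, Maker must personally have claimed at least $r$ ``backward'' edges inside the relevant $t$-set. This converts the problem into a genuine Maker--Breaker game: the board is the $\binom{n}{2}$ pairs, Maker ``claims'' a pair iff he orients it against $\sigma$, and the hyperedges are the $r$-subsets of pairs within each $t$-vertex set (at most $\binom{n}{t}\binom{\binom{t}{2}}{r}$ of them, each of size exactly $r$). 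Applying Theorem~\ref{thm:Erdos-selfride-beck} then gives the stated threshold $b>c(H)\,n^{t/r}$. The linear-order device is exactly what makes the reduction to the blocking game legitimate; without it, the counting you perform does not translate into a valid Breaker strategy in the orientation game.
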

\begin{proof}
The proof follows by a simple application of the
Beck-Erd\H{o}s-Selfridge theorem
(Theorem~\ref{thm:Erdos-selfride-beck}). Breaker will choose an
arbitrary bijection $\sigma$ of the vertices, and at every turn he
will direct the edges according to $\sigma$. That is, whenever he
chooses to direct an edge $uv$, and $\sigma(v)>\sigma(u)$ the edge
will be directed from $u$ to $v$. Hence, if Maker creates a copy of
$H$, by definition he orients at least $\FAS(H)$ edges in the
opposite direction with respect to $\sigma$. We can thus reduce the
game to the classical Maker-Breaker game as follows. In every set of
$t$ vertices, Maker can win only if he claims at least $r$ edges
that are induced by this set, and Breaker wins if he prevents Maker
from doing so. The total number of winning sets for Maker is at most
${n \choose t} \cdot {{t \choose 2} \choose r}$. Therefore, if
$(q+1)^{r} = \Omega({n \choose t} \cdot {{t \choose 2} \choose r})$
then by Theorem~\ref{thm:Erdos-selfride-beck}, Breaker has a winning
strategy. This is the case if $b >  c(t,r) \cdot n^{t/r}$, and hence
the lemma holds.
\end{proof}

It is worth noting that following the methods of Bednarska and
{\L}uczak~\cite{BL00}, one can prove that if $b =
O(n^{\frac{|V(H)|-2}{|E(H)|-1}})$ then Maker has a winning strategy
as follows. Maker chooses at each round a random undirected edge and
orients it randomly, independently of the other choices. Roughly
speaking, one can show that by the end of the game the obtained
graph looks random in some sense, and hence if the bias is large
enough then with high probability it contains a copy of $H$.

However, following their approach does not give sharp bounds in our
case. To see this, observe that their results give a sharp bound of
$b = \Theta(\sqrt{n})$ for the triangle creation game in the
classical Maker-Breaker settings, while in orientation games the
correct bias for creating a cyclic triangle or even any directed
cycle is $b = \Theta(n)$, as we will see shortly.

We next generalize the result of Section~\ref{section:cycle-game}
and show that in the case that $H$ is a fixed cycle, Maker wins even
if $b = \Omega(n)$.
\begin{prop}
\label{prop:maker-creates-cycle} For every constant $k\geq 3$ there
is a constant $\gamma(k)>0$ such that if $b < \gamma(k) \cdot n$
then Maker wins the $b$-biased $C_{k}$-creation game.
\end{prop}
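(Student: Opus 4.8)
The plan is to imitate the argument for Theorem~\ref{thm:creating-cycle}, but this time Maker must build a specific cycle $C_k$ rather than just any cycle, so instead of extending a single longest directed path we will maintain a large family of vertex-disjoint directed paths of length roughly $k-1$ and then argue that Breaker cannot simultaneously block all the ``closing'' edges. Concretely, I would first have Maker use Lemma~\ref{lem:maker-strategy-creating-cycle} (or a mild variant of it) inside many disjoint vertex blocks: partition $V(K_n)$ into $\Theta(n)$ blocks of size $k$, and in each block Maker tries, round by round, to build a Hamilton path of that block, i.e.\ a directed path on $k$ vertices. Since a block has only $\binom{k}{2}$ edges and $k$ is constant, each block requires only a constant number of Maker moves; after $O(n)$ rounds Maker has completed directed $(k-1)$-paths in a linear number of blocks, \emph{provided} Breaker has not destroyed too many blocks. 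As in the cycle game, the key point is a counting/potential argument: Breaker, moving $b=\gamma(k)n$ edges per round, can ``touch'' at most $\gamma(k)n$ blocks per round, so if $\gamma(k)$ is a small enough constant the number of rounds needed ($O(n)$) is too small for Breaker to interfere with a constant fraction of the $\Theta(n)$ blocks.

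Once Maker holds a directed path $v_1 \to v_2 \to \cdots \to v_k$ on $k$ vertices, the single edge $v_k v_1$ closes it into a copy of $C_k$; call such an edge a \emph{closing edge}, and say the path is \emph{live} if its closing edge is still undirected or is already oriented from $v_k$ to $v_1$. Maker's overall strategy is: on each turn, if some live path can be closed by orienting one undirected edge, do it and win; otherwise continue building paths in fresh blocks. So the proposition reduces to showing Breaker cannot, over the whole game, kill the closing edges of all the $\Theta(n)$ $(k-1)$-paths Maker builds. Here one must be a little careful that Maker can re-aim within a block: if Breaker pre-orients an edge inside a block ``the wrong way,'' Maker should simply relabel the target Hamilton path of that block so that the already-oriented edges are consistent with it — any orientation of a few edges of $K_k$ extends to a Hamiltonian path orientation of $K_k$ as long as not too many edges are fixed, and since Breaker spends at most $O(n)$ moves total inside blocks he fixes $O(1)$ edges in all but $o(n)$ blocks. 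I would phrase this as: in all but $o(n)$ of the blocks Breaker has oriented at most, say, $\binom{k}{2}-k$ edges by the time Maker turns to that block, which still leaves a Hamiltonian-path orientation available, and then Maker needs only $O(1)$ further moves there.

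Finally the closing step: Maker produces, in $O(n)$ rounds, a set $\mathcal{S}$ of $\delta n$ disjoint completed directed $(k-1)$-paths whose closing edges are pairwise distinct. Breaker has made $O(n)$ moves in total, hence has oriented $O(n)$ edges, so with $\delta$ chosen large relative to the implied constants there is a path in $\mathcal{S}$ whose closing edge Breaker never oriented — or, if Breaker did orient all of them, a counting inequality of exactly the shape $(b+1)\cdot(\text{\# rounds}) < |\mathcal{S}|$ (mirroring the one in the proof of Theorem~\ref{thm:creating-cycle}) is violated, so at some round Maker can close a cycle before Breaker blocks it. Taking $\gamma(k)$ small enough to make all these ``$O(n)$ rounds vs.\ $\Theta(n)$ blocks'' inequalities go through proves the claim. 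The main obstacle I anticipate is the bookkeeping in the middle step — guaranteeing that Breaker's early interference inside a block never prevents Maker from eventually realizing \emph{some} Hamiltonian path there with only a constant number of additional Maker moves, and controlling the $o(n)$ ``spoiled'' blocks — rather than the final counting, which is essentially the same as in the cycle game. Everything else (the disjointness of closing edges, the per-round $\le \gamma(k)n$ bound on Breaker's reach) is routine once the block structure is set up.
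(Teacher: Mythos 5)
Your final counting step contains a fatal error. You write that ``Breaker has made $O(n)$ moves in total, hence has oriented $O(n)$ edges,'' but Breaker orients $b=\gamma(k)n$ edges \emph{per round}, so over the $\Theta(n)$ rounds you allot Maker he orients $\Theta(\gamma(k)n^2)$ edges. Your family $\mathcal S$ of closing edges has size only $\Theta(n)$ (one per block), so the inequality you invoke, $(b+1)\cdot(\#\text{rounds})<|\mathcal S|$, reads $\gamma(k)n\cdot\Theta(n)<\delta n$, which fails for every large $n$ no matter how small the constant $\gamma(k)>0$ is. In fact the block structure lets Breaker win outright: the blocks together span only $\binom{k}{2}\cdot\frac{n}{k}=\frac{(k-1)n}{2}$ edges of $K_n$, so Breaker can make \emph{every} block a transitive tournament within $\frac{k-1}{2\gamma(k)}=O(1)$ rounds, after which no block can ever contain a directed cycle, while Maker has placed only $O(1)$ edges. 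The ``re-aiming'' worry you flag in the middle of the argument is real, but it is downstream of this quantitative collapse.

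The paper's proof avoids the problem by not decomposing into constant-size blocks at all. Maker builds a \emph{single} long directed path via Lemma~\ref{lem:maker-strategy-creating-cycle}, and the key structural observation — which your proposal never uses — is that in a tournament any directed cycle of length $k+(k-2)r$, $r\ge 0$, forces a copy of $C_k$ (peel off $k-2$ vertices at a time using the chord). Along a path $x_1,\dots,x_t$ this makes \emph{every} back-edge $x_i\to x_j$ with $i-j\equiv k-1\pmod{k-2}$ a viable closing edge, giving $\Omega\!\left(\binom{t}{2}/k\right)$ distinct closing edges — quadratic in $t$. Against Breaker's quadratic budget $(b+1)t$ the count then closes once $b<\frac{n-1-2k}{2k}$. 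Your disjoint constant blocks yield only linearly many closing edges and cannot compete with a linear-bias Breaker; to repair the plan each ``block'' would need $\Omega(n)$ vertices and the $k+(k-2)r$ observation, at which point you have reproduced the single-path argument.
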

\begin{proof}
We first observe that if a tournament $T$ contains a cycle of length
$k+(k-2)r$ for some $r \in \N$ then it also contains a cycle of
length $k$. The proof of this observation is by induction. It is
trivially true for $r=0$. Suppose this is true for all values
smaller than some fixed $r$, and let
$v_1,v_2,\ldots,v_{k+(k-2)r},v_1$ a cycle of length $k+(k-2)r$.
Consider the edge between $v_k$ and $v_1$. If the edge is directed
from $v_k$ to $v_1$ there is a cycle of length $k$ and we are done.
Otherwise, $v_k,v_{k+1},\ldots,v_{k+(k-2)r},v_1,v_k$ is a cycle of
length $k+(k-2)(r-1)$ and therefore by the induction hypothesis $T$
contains a cycle of length $k$, as required.

Therefore, in order to create a cycle $C_k$, Maker has to create
some cycle of length $k+(k-2)r$. By
Lemma~\ref{lem:maker-strategy-creating-cycle}, at each round Maker
can extend a longest directed path by $1$. Maker's strategy is to
close a cycle of length $k+(k-2)r$ whenever it is possible, and to
extend a longest directed path by $1$ if it is not possible. After
$t$ rounds, there is a path of length $t$, and we denote it by
$x_1,\ldots,x_t$. For every $i \geq k$, the number of edges from
$x_i$ to $x_j$, $j<i$, that may close a cycle of length $k+(k-2)r$
is at least $\frac{i}{k-2}-2$. Hence the total number of edges that
may close a cycle of length $k+(k-2)r$ for some $r$ is at least
$$\sum_{i=k}^{t} \left( \frac{i}{k-2}-2 \right) \geq
\frac{(t+k)(t-k)}{2(k-2)}-2t.$$ Therefore, the number of such edges
is at least $\frac{{t \choose 2}}{k}$ for $t = \Omega(k^2)$, that is
at least $(1/k)$-fraction of the edges for such $t$. Among these
$\frac{{t \choose 2}}{k}$ edges, at most $(b+1)t$ were oriented by
either Maker or Breaker in previous rounds. Note that as long as the
bias $b$ is smaller than $n/2$, the game lasts at least $t = n$
rounds, and results in a path of length $n-1$, unless Maker wins
before. Therefore if $b<\frac{n-1-2k}{2k}$ then Maker wins the game,
as required.
\end{proof}

Recall that a tournament $T$ is $k$-colorable if its edges can be
partitioned into $k$ transitive tournaments. Berger et
al.~\cite{BCCFLSST11} studied the class of tournaments $H$ with the
property that there is constant $c(H)$ such that every $H$-free
tournament $T$ is $c(H)$-colorable. They called every such
tournament $H$ a {\em hero}, and characterized the set of such
tournaments. We next show that for every $k>0$ Maker has a strategy
to create a non $k$-colorable tournament as long as the bias is a
sufficiently small linear function of $n$.

\begin{lem}
\label{lem:maker-strategy-creating-tournament-with-high-chromatic-number}
Let $k>0$, and suppose that $b = \frac{cn}{k \log{k}}$ for some
sufficiently small constant $c>0$. Then Maker has a strategy to
create a non $k$-colorable tournament.
\end{lem}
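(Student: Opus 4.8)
The strategy is to reduce "non-$k$-colorable" to a structural target that Maker can build with only a linear bias, exploiting the fact that a tournament is $k$-colorable iff its vertex set partitions into $k$ transitive pieces. The plan is to fix a tournament $R$ on a bounded number $m = m(k)$ of vertices which is itself not $k$-colorable and moreover is "robustly" so — in fact, a random tournament on $m = O(k\log k)$ vertices has chromatic number $> k$ with high probability (a standard first-moment computation: there are at most $k^m$ colorings, and for each fixed partition into $k$ parts the probability all parts induce transitive subtournaments is exponentially small in $m^2/k$, which beats $k^m$ once $m \gg k\log k$). Then it suffices for Maker to force a copy of this fixed $R$ inside the eventual tournament, because containing $R$ immediately forces chromatic number $> k$. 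So the lemma reduces to the $R$-creation game for this particular $R$ with $|V(R)| = \Theta(k\log k)$ and $\FAS(R) = \Theta(m^2) = \Theta((k\log k)^2)$ vertices' worth of "wrong" edges.

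Next I would invoke the lower-bound machinery already developed in this section. Proposition~\ref{prop:maker-creates-cycle} handles cycles; here $R$ is not a cycle, so instead I expect to build $R$ greedily by a path-extension / embedding argument analogous to Lemma~\ref{lem:maker-strategy-creating-cycle} and the proof of Proposition~\ref{prop:maker-creates-cycle}: Maker maintains a long directed path (extendable by one each round by Lemma~\ref{lem:maker-strategy-creating-cycle}), and since $R$ has a Hamiltonian-path-like backbone (choose the vertex ordering $\sigma$ achieving $\FAS(R)$, so $R$ consists of a transitive "forward" part plus $r = \FAS(R)$ backward edges), Maker tries at each round to close one of the "backward" edges needed to complete an embedded copy of $R$ on a window of $m$ consecutive path-vertices. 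Counting as in the earlier proofs: on a path of length $t$ there are $\Omega(t/m)$ disjoint windows of size $m$, in each of which $\Omega(1)$ distinct edge-slots (the $r$ backward edges of $R$ on that window, up to the automorphism/relabeling freedom) would complete a copy of $R$; so there are $\Omega(t / m) \cdot \Omega(1)$, i.e.\ $\Omega(t/m)$ winning edges overall once $t = \Omega(\mathrm{poly}(m))$. Before Maker's round $t$, at most $(b+1)t$ edges total have been oriented, and since $b < n/2$ the game runs for $t = \Omega(n)$ rounds. Hence Maker wins provided $(b+1) \lesssim n / m = \Theta(n/(k\log k))$, which is exactly the hypothesis $b = cn/(k\log k)$ for $c$ small enough.

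The main obstacle is the counting step: unlike a cycle, a general fixed $R$ is not closed by a \emph{single} extra edge from a path, so I need to be careful that on each length-$m$ window there is a genuinely positive density of "last edges" whose addition completes an embedded copy of $R$ — this requires setting up the embedding so that $R$ minus one well-chosen backward edge is still realizable along a directed path, and that these last-edge slots across different windows are distinct edges of $K_n$. A clean way around this is to not insist on the full window simultaneously: have Maker reserve, as he extends the path, the forward/transitive structure for free (it is automatically present along any directed path, since the path itself is a transitive backbone on those vertices — wait, it is not transitive, only the path edges are present), so more realistically Maker should build $R$ one backward-edge at a time, treating the problem as $r$ successive cycle-closing-type subgoals; then the per-round gain is a $1/\mathrm{poly}(m)$ fraction of edges and the same arithmetic gives $b = \Omega(n/\mathrm{poly}(k\log k))$. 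Getting the dependence down to the claimed $n/(k\log k)$ rather than a larger power of $k\log k$ is the delicate point, and I expect the authors extract it by choosing $R$ with additional structure (e.g.\ $R$ a blow-up or an explicit non-$k$-colorable tournament with $\FAS(R)$ linear in $|V(R)|$, not quadratic), so that the window size and the number of sub-targets are balanced to give exactly $\Theta(k\log k)$.
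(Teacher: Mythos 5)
The paper's argument is fundamentally different from yours, and the route you sketch has a genuine gap. The paper does not force any fixed non-$k$-colorable witness tournament $R$ at all. Instead it exploits the equivalence ``$k$-colorable $\Rightarrow$ contains a transitive subset of size $\ge n/k$ $\Rightarrow$ contains two disjoint sets $A,B$ of size $n/2k$ with \emph{every} edge directed from $A$ to $B$,'' and plays to make the latter impossible. Concretely: fix a reference tournament $T^*$ in which every ordered pair of disjoint $(n/2k)$-sets has $\Theta(n^2/k^2)$ edges in each direction (a Chernoff/union-bound existence argument), build a hypergraph whose hyperedges are, for each ordered pair $(A,B)$, the set of edges of $K_n$ that $T^*$ directs from $A$ to $B$, and let Maker play the \emph{Breaker} role in the classical game on this hypergraph, always orienting according to $T^*$. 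Erd\H{o}s--Selfridge--Beck with hyperedge size $\Theta(n^2/k^2)$ and $\binom{n}{n/2k}^2 \le (2ek)^{n/k}$ hyperedges then gives the threshold $b = \Theta(n/(k\log k))$ directly. There is no path-extension step and no embedding of a small subtournament.

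Your reduction (if $R$ is a fixed non-$k$-colorable tournament and Maker forces a copy of $R$, then Maker wins) is correct, and your first-moment computation showing that a random tournament on $m = \Theta(k\log k)$ vertices is not $k$-colorable is also correct. But the reduction lands you on a statement the paper has not established and which you do not prove: that Maker can force a copy of an arbitrary fixed tournament $R$ with bias $\Theta(n/(k\log k))$. The path-extension argument of Lemma~\ref{lem:maker-strategy-creating-cycle} and Proposition~\ref{prop:maker-creates-cycle} works precisely because a cycle is completed by a \emph{single} additional edge from a directed path; a random $R$ with $\FAS(R) = \Theta(m^2)$ is not, and your sketch of ``$r$ successive cycle-closing subgoals'' is not an argument — you would need to show those subgoals can be scheduled so that Breaker cannot sabotage the partially built windows, and as you yourself note the naive accounting only gives $b = n/\mathrm{poly}(k\log k)$, not $n/(k\log k)$. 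Your fallback (choose $R$ with $\FAS(R)$ linear in $m$) is also not obviously available: a tournament with $\FAS(R) = r$ has chromatic number at most $r+1$ (remove one endpoint of each back edge to get a transitive set, color those $\le r$ vertices separately), so non-$k$-colorability forces $\FAS(R) \ge k$, and it is not shown how to embed even such an $R$ at the right bias. Note also that the only general $H$-creation result proved in the paper for small bias with linear threshold, Proposition~\ref{prop:Maker-wins-graphs-with-single-reserved-edge}, is itself a \emph{consequence} of this lemma, so invoking a general ``Maker forces fixed $R$'' result here would risk circularity. The missing idea is the Erd\H{o}s--Selfridge reduction via transitive subsets and a reference tournament $T^*$, which sidesteps embedding entirely.
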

\begin{proof}
It is rather easy to see using a Chernoff bound (as it was done in
Section~\ref{sec:Hamiltonicity}) that  a random tournament obtained
by directing each edge uniformly and independently of the other
choices has typically the following property. For every ordered pair
of disjoint sets $A,B$ of size $n/2k$, there are
$\Theta(\frac{n^2}{k^2})$ edges in each direction between $A$ and
$B$. Fix a tournament $T^*$ with this property.

Define a hypergraph $H$ whose vertices are the edges of $T^*$ and
whose edges are all the edges from $A$ to $B$ in $T^*$ for every
ordered pair $A,B$ of size $n/2k$. Maker will win the game by
orienting one edge from every hyperedge in $H$ according to $T^*$.
To this end, Maker will play to prevent Breaker from orienting all
the edges in some hyperedge from $H$. By the end of the game, there
is an edge between every two sets of size $n/2k$ and hence the
obtained tournament does not contain an acyclic set of size $n/k$,
and therefore is not $k$-colorable.

There are ${n \choose n/2k}^2 \leq (2ek)^{n/k}$ choices of ordered
pairs $(A,B)$, each corresponding to a hyperedge of $H$. The size of
each hyperedge is $\Theta(\frac{n^2}{k^2})$. By applying the
Beck-Erd\H{o}s-Selfridge strategy
(Theorem~\ref{thm:Erdos-selfride-beck}, with Maker playing role of
Breaker, $p=b$ and $q=1$), if $b=\frac{cn}{k\log{k}}$ then Maker has
a winning strategy, as required.
\end{proof}

A simple consequence of
Lemma~\ref{lem:maker-strategy-creating-tournament-with-high-chromatic-number}
is the following generalization of
Lemma~\ref{lem:maker-strategy-creating-cycle}. Berger et
al.~\cite{BCCFLSST11} provided a list of five minimal tournaments
$H_1,H_2,\ldots H_5$, and proved (Theorem 5.1 in~\cite{BCCFLSST11})
that every non-hero tournament must contain at least one of
$H_1,\ldots, H_5$ as a subtournament. For every $1 \leq i \leq 5$,
one can check that $\FAS(H_i) \geq 2$.

Consider any oriented graph $H$ with $\FAS(H)=1$. Let $\sigma$ be an
ordering of $V(H)$ with a single edge that does not agree with
$\sigma$. Let $H'$ be a tournament on $V(H)$ that contains $H$ as
subgraph and is defined as follows. For every two vertices $u,v \in
V(H)$, if $(u,v) \in E(H)$ we let $(u,v) \in E(H')$. If $(u,v),(v,u)
\notin E(H)$, we let $(u,v) \in E(H')$ if $\sigma(v)>\sigma(u)$ and
$(v,u) \in E(H')$ otherwise. We get that $\FAS(H') = 1$ as well.

Clearly, it is sufficient to construct a copy of $H'$ for Maker's
win. The result of Berger et al.~\cite{BCCFLSST11} can be applied
only for tournaments, and hence we will use it to show that Maker
can construct a copy of $H'$.

Since $\FAS(H')=1$ then $H'$ is a hero, and therefore every
tournament that does not contain a copy of $H'$ is
$c(H')$-colorable, where $c(H')$ is a constant that depends only on
$H'$. By
Lemma~\ref{lem:maker-strategy-creating-tournament-with-high-chromatic-number},
if $b=\Theta(n)$ then Maker has a strategy so the obtained
tournament is not $c(H')$-colorable. We therefore have the
following.

\begin{prop}
\label{prop:Maker-wins-graphs-with-single-reserved-edge} For every
oriented graph $H$ with $\FAS(H)=1$ there is a constant
$\gamma(H)>0$ such that Maker wins the $\gamma n$-biased $H$
creation game.
\end{prop}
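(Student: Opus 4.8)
The plan is to combine the three ingredients that the paper has just assembled: (1) the structural reduction to tournaments, (2) the hero characterization of Berger et al., and (3) Lemma~\ref{lem:maker-strategy-creating-tournament-with-high-chromatic-number} producing a non-$k$-colorable tournament at linear bias. First I would observe that it suffices to build a copy of the tournament $H'$ defined in the preceding paragraph, since $H'$ contains $H$ as a subgraph and has $\FAS(H')=1$. The reason to pass to $H'$ is that the hero machinery of~\cite{BCCFLSST11} is a statement about \emph{tournaments}, not general oriented graphs, so we cannot apply it to $H$ directly.

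Next I would invoke the key fact, which the paper attributes to Berger et al.~\cite{BCCFLSST11}: if $\FAS(H')=1$ then $H'$ is a hero, and hence there is a constant $c(H')$ such that every $H'$-free tournament is $c(H')$-colorable. (One should note that $\FAS(H')=1$ in particular means $H'$ contains a directed cycle but is ``almost'' transitive, so it lies in the easiest part of the hero classification; for the proof here it is enough to cite that $\FAS(H')=1$ tournaments are heroes.) Contrapositively, any tournament that is \emph{not} $c(H')$-colorable must contain a copy of $H'$.

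Then I would apply Lemma~\ref{lem:maker-strategy-creating-tournament-with-high-chromatic-number} with $k = c(H')$, a fixed constant. That lemma gives a constant $c>0$ so that if $b = \frac{c\,n}{k\log k}$, Maker has a strategy forcing the final tournament to be non-$k$-colorable. Since $k=c(H')$ is a constant depending only on $H$, the quantity $\frac{c}{k\log k}$ is a positive constant depending only on $H$; call it $\gamma(H)$. With bias $b < \gamma(H)\cdot n$, bias monotonicity (a smaller bias only helps Maker) lets us apply the lemma's strategy, so Maker forces a non-$c(H')$-colorable tournament, which therefore contains a copy of $H'$ and hence of $H$. This establishes the proposition.

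The argument is essentially a matter of stitching together quoted results, so there is no serious computational obstacle; the only point requiring care is the logical direction in which the hero theorem is used — we need ``not $c(H')$-colorable $\Rightarrow$ contains $H'$'', which is the contrapositive of the defining property of a hero, together with the fact that $\FAS(H')=1$ certifies $H'$ \emph{is} a hero. A secondary subtlety worth a sentence is why it is legitimate to work with $H'$ instead of $H$: because $\FAS$ is monotone under adding edges in the ``correct'' direction relative to $\sigma$, the tournament $H'$ still has $\FAS(H')=1$, and any copy of $H'$ in the final tournament yields a copy of $H$ by deleting the edges of $H'$ not in $H$.
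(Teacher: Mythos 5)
Your proposal is correct and follows the same route as the paper: pass to the tournament completion $H'$ with $\FAS(H')=1$, note that $H'$ is a hero by the Berger et al.\ classification (since all five minimal non-heroes have $\FAS\geq 2$), and then apply Lemma~\ref{lem:maker-strategy-creating-tournament-with-high-chromatic-number} with $k=c(H')$ to force a non-$c(H')$-colorable and hence $H'$-containing tournament at linear bias. The only cosmetic difference is that you make the contrapositive and the bias-monotonicity step explicit, which the paper leaves implicit.
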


We conjecture that the bias threshold that guarantees Maker's win
strongly depends on $\FAS(H)$. It will be interesting to find
further quantitative results in this direction.

\appendix

\section{Proof of Proposition~\ref{prop:Gebauer-Szabo-app}}

Here we provide the complete details of Gebauer-Szab\'{o} approach
and show that Maker can win the min-degree game if $b =
\frac{(1-o(1))n}{\ln n}$ where the base graph is the complete
bipartite graph $K_{n,n}$. We give the proof for every min-degree
$c$, though we need only the case $c=4$.

We assume for simplicity that Breaker starts the game, this does not
change the asymptotic threshold of this game. We say that the game
ends when either all vertices have degree at least $c$ in Maker's
graph (and Maker won) or one vertex has degree at least $n - c + 1$
in Breaker's graph (and Breaker won). With $\degm(v)$ and $\degb(v)$
we denote the degree of a vertex $v$ in Maker's graph and in
Breaker's graph, respectively. A vertex $v$ is called
\emph{dangerous} if $\degm(v) \leq c  -1 $. To establish Maker's
strategy we define the \emph{danger value} of a vertex $v$ as
$\danger(v) := \degb(v) - 2b \cdot \degm(v)$.

\paragraph{Maker's strategy $S_M$}
Before his $i$th move Maker identifies a dangerous vertex $v_i$ with
the largest danger value, ties are broken arbitrarily. Then, as his
$i$th move Maker claims an edge incident to $v_i$. We refer to this
step as ``easing $v_{i}$''.

Observe that Maker can always make a move according to his strategy
unless no vertex is dangerous (thus he won) or Breaker occupied at
least $n-c+1$ edges incident to a vertex (and Breaker won).

Also, a vertex $v_i$ was dangerous any time before Maker's $i$th
move.

Suppose, for a contradiction, that Breaker, playing with bias $b$,
has a strategy $S_B$ to win the min-degree-$c$ game against Maker
who plays with bias $1$. Let $B_{i}$ and $M_{i}$ denote the $i$th
move of Breaker and Maker, respectively, in the game where they play
against each other using their respective strategies $S_B$ and
$S_M$.  Let $g$ be the length of this game, i.e., the maximum degree
of Breaker's graph becomes larger than $n-c$ in move $B_g$. We call
this the end of the game.

For a set $I\subseteq V$ of vertices we let $\avdanan( I )$ denote
the average danger value $\frac{\sum_{v\in I} \danger(v)}{|I|}$ of
the vertices of $I$. When there is risk of confusion we add an index
and write $\danger_{B_{i}}(v)$ or $\danger_{M_{i}}(v)$ to emphasize
that we mean the danger-value of $v$ \emph{directly before} $B_{i}$
or $M_{i}$, respectively.

In his last move Breaker takes $b$ edges to increase the maximum
Breaker-degree of his graph to at least $n-c$ (in fact, at least
$n-c+1$). In order to be able to do that, directly before Breaker's
last move $B_g$ there must be a dangerous vertex $v_g$ whose
Breaker-degree is at least $n-c-b$. Thus $\danger_{B_g}(v_g) \geq
n-c-b - 2b(c-1).$

Recall that $v_1, \ldots , v_{g-1}$ were defined during the game.
For $0\leq i \leq g-1$, we define the set $I_i$ as $I_i=\{ v_{g-i},
\ldots v_g\}$.

The following lemma estimates the change in the average danger
during Maker's move.
\begin{lem} \label{lem:makermindeg}
Let $i$, $1\leq i \leq g-1$,

$(i)$ if $I_i\neq I_{i-1}$, then $\avdanan_{M_{g-i}}(I_{i}) -
\avdanan_{B_{g-i+1}}(I_{i-1}) \geq 0.$

$(ii)$ if $I_i=I_{i-1}$, then $\avdanan_{M_{g-i}}(I_{i}) -
\avdanan_{B_{g-i+1}}(I_{i-1}) \geq \frac{2b}{|I_i|}.$
\end{lem}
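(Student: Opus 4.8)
We are analyzing the Gebauer–Szabó danger-value argument on the bipartite board $K_{n,n}$. Maker always eases a dangerous vertex of maximum danger value; Breaker plays with bias $b$. We are tracking the sets $I_i = \{v_{g-i},\ldots,v_g\}$, where $v_j$ is the vertex Maker eased in move $M_j$, and comparing the average danger $\avdanan$ of these sets at successive times. The lemma to prove bounds the change in $\avdanan$ caused by a single Maker move $M_{g-i}$: if the vertex $v_{g-i}$ eased in this move is new to the set (case $I_i\neq I_{i-1}$), the average does not decrease; if it was already in the set (case $I_i=I_{i-1}$), the average increases by at least $2b/|I_i|$.

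**The plan.** The key observation is that a single Maker move $M_{g-i}$ changes exactly one degree: it increases $\degm(v_{g-i})$ by $1$, hence \emph{decreases} $\danger(v_{g-i})$ by exactly $2b$, and leaves every other vertex's danger value unchanged. (On the bipartite board each Maker edge is incident to one vertex of each side, but only the eased vertex $v_{g-i}$ is the one whose danger we have been tracking; more carefully, the danger value as defined changes by $-2b\cdot\Delta\degm$ and Breaker's move is separated out.) So I would start by writing down precisely: $\danger_{M_{g-i}^{+}}(v_{g-i}) = \danger_{M_{g-i}}(v_{g-i}) - 2b$, and $\danger(w)$ unchanged for $w\neq v_{g-i}$, where the superscript $+$ denotes the value directly after the move, which equals $\danger_{B_{g-i+1}}(\cdot)$ since nothing happens between $M_{g-i}$ and $B_{g-i+1}$.

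**Case (ii): $I_i = I_{i-1}$.** Here $v_{g-i}$ was already an element of $I_{i-1} = \{v_{g-i+1},\ldots,v_g\}$, so the index set of vertices is literally the same, $I_i = I_{i-1}$, and in particular $|I_i| = |I_{i-1}|$. The only change to the multiset of danger values over these vertices is that $\danger(v_{g-i})$ dropped by $2b$ during $M_{g-i}$. Reading the inequality in the direction of time reversal: $\avdanan_{M_{g-i}}(I_i)$ is the average \emph{before} the move, $\avdanan_{B_{g-i+1}}(I_{i-1})$ is the average \emph{after}, so the difference is exactly $+2b/|I_i|$ — actually $\geq 2b/|I_i|$ once one accounts for the fact that we should also justify that other vertices in $I_i$ did not have their danger values altered, which holds because only $v_{g-i}$ was eased. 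So here the bound is clean and in fact tight.

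**Case (i): $I_i \neq I_{i-1}$.** Now $v_{g-i}\notin I_{i-1}$, so $I_i = \{v_{g-i}\}\cup I_{i-1}$ with $|I_i| = |I_{i-1}|+1$. The danger of $v_{g-i}$ drops by $2b$ during $M_{g-i}$, and no other danger changes. We need $\avdanan_{M_{g-i}}(I_i) \geq \avdanan_{B_{g-i+1}}(I_{i-1})$; equivalently, the average of the danger values of $I_{i-1}$ together with $\danger_{M_{g-i}}(v_{g-i})$ is at least the average of $I_{i-1}$ alone (the danger values of $I_{i-1}$ being unchanged by $M_{g-i}$). Adjoining one element to a set does not decrease the average precisely when that element's value is at least the current average. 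So the crux is: $\danger_{M_{g-i}}(v_{g-i}) \geq \avdanan_{M_{g-i}}(I_{i-1})$. This is where Maker's greedy rule enters: directly before move $M_{g-i}$, Maker chose $v_{g-i}$ as a dangerous vertex of \emph{maximum} danger value among all dangerous vertices. Every vertex in $I_{i-1} = \{v_{g-i+1},\ldots,v_g\}$ was dangerous at the time it was chosen, which was \emph{later} than time $M_{g-i}$; I need that each such $v_j$ ($j > g-i$) was \emph{also} dangerous, and its danger value no larger than $\danger_{M_{g-i}}(v_{g-i})$, already at time $M_{g-i}$.

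**The main obstacle, and how I resolve it.** The difficulty is comparing danger values of the vertices $v_j$ ($j>g-i$) at the earlier time $M_{g-i}$ against $\danger_{M_{g-i}}(v_{g-i})$. Two monotonicity facts do it. First, danger values only increase over time except at the single vertex eased in a given Maker move (Breaker moves only raise $\degb$, hence raise danger; Maker's move $M_j$ lowers danger only at $v_j$). So for $j > g-i$, between times $M_{g-i}$ and $M_j$ the vertex $v_j$'s danger was eased only at moves $M_{g-i}, M_{g-i+1},\ldots$ at which $v_j$ happened to be the chosen vertex — but by definition $v_j$ is chosen only at $M_j$ itself within this range (the chosen vertices at $M_{g-i},\ldots,M_{j-1}$ are $v_{g-i},\ldots,v_{j-1}$, all distinct indices, so none equals $v_j$ unless a vertex is re-eased, which is exactly the case-split we made — in case (i) the relevant indices are distinct). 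Hence $\danger_{M_j}(v_j) \geq \danger_{M_{g-i}}(v_j)$ for $j>g-i$ in this situation. Second, $v_j$ is dangerous at time $M_j$, so $\degm(v_j)\le c-1$ at that time, hence also at the earlier time $M_{g-i}$ (Maker-degrees are nondecreasing), so $v_j$ is dangerous at time $M_{g-i}$ too. Therefore at time $M_{g-i}$, the vertex $v_j$ is a dangerous vertex, and by Maker's maximum-danger choice $\danger_{M_{g-i}}(v_{g-i}) \geq \danger_{M_{g-i}}(v_j)$. Combining, $\danger_{M_{g-i}}(v_{g-i}) \geq \danger_{M_{g-i}}(v_j)$ for every $v_j\in I_{i-1}$, so $v_{g-i}$'s danger is at least the average of $I_{i-1}$, giving case (i). I would present these two monotonicity observations as a short preliminary paragraph and then let both cases follow in a few lines.

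\medskip

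\noindent\textbf{Proof (proposal).} A single move $M_{g-i}$ of Maker eases the vertex $v_{g-i}$, increasing $\degm(v_{g-i})$ by exactly $1$ and changing no other Maker-degree; hence
$$\danger_{B_{g-i+1}}(v_{g-i}) = \danger_{M_{g-i}}(v_{g-i}) - 2b, \qquad \danger_{B_{g-i+1}}(w) = \danger_{M_{g-i}}(w) \text{ for } w\neq v_{g-i}.$$
We record two monotonicity facts. (a) For any vertex $w$ and any two times $\tau_1 < \tau_2$ with no Maker move easing $w$ strictly between them, $\danger_{\tau_2}(w)\geq \danger_{\tau_1}(w)$, since Breaker moves only increase $\degb$ (raising danger) and the other Maker moves leave $\degm(w)$ fixed. (b) Maker-degrees are nondecreasing in time, so if $w$ is dangerous at some time it was dangerous at every earlier time.

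In case (ii), $I_i = I_{i-1}$ as index sets and $|I_i| = |I_{i-1}|$. Only $\danger(v_{g-i})$ changes during $M_{g-i}$, and it drops by $2b$, so
$$\avdanan_{M_{g-i}}(I_i) - \avdanan_{B_{g-i+1}}(I_{i-1}) = \frac{2b}{|I_i|} \geq \frac{2b}{|I_i|},$$
which is $(ii)$.

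In case (i), $v_{g-i}\notin I_{i-1}$ and $I_i = \{v_{g-i}\}\cup I_{i-1}$ with $|I_i| = |I_{i-1}|+1$. The danger values of the vertices of $I_{i-1}$ are unchanged by $M_{g-i}$, so it suffices to show $\danger_{M_{g-i}}(v_{g-i}) \geq \avdanan_{M_{g-i}}(I_{i-1})$, as adjoining to a set an element of value at least the current average does not decrease the average. Fix $v_j\in I_{i-1}$, so $j>g-i$. Among the moves $M_{g-i},\ldots,M_{j-1}$ the eased vertices are $v_{g-i},\ldots,v_{j-1}$, and since $v_j\notin I_{i-1}=\{v_{g-i+1},\dots,v_g\}$ fails only for $v_j$ itself which is not among $v_{g-i},\dots,v_{j-1}$, none of these eases $v_j$; by (a), $\danger_{M_{g-i}}(v_j) \leq \danger_{M_j}(v_j)$. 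Moreover $v_j$ is dangerous at time $M_j$, hence by (b) dangerous at time $M_{g-i}$, so Maker's maximum-danger choice at move $M_{g-i}$ gives $\danger_{M_{g-i}}(v_{g-i}) \geq \danger_{M_{g-i}}(v_j)$. Thus $\danger_{M_{g-i}}(v_{g-i}) \geq \danger_{M_{g-i}}(v_j)$ for every $v_j\in I_{i-1}$, hence $\danger_{M_{g-i}}(v_{g-i}) \geq \avdanan_{M_{g-i}}(I_{i-1})$, which yields $(i)$. \qed
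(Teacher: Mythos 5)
Your proof is correct and follows essentially the same route as the paper: case (ii) is a direct calculation of the change in average danger during the Maker move, and case (i) combines the maximum-danger choice with the fact that every $v_j \in I_{i-1}$ was already dangerous at time $M_{g-i}$, so $v_{g-i}$'s danger dominates the average of $I_{i-1}$. One small inaccuracy to flag: your claim that the move $M_{g-i}$ changes \emph{no} Maker-degree other than $\degm(v_{g-i})$ (and the corresponding ``monotonicity fact'' (a)) is not literally true, since Maker's chosen edge has a second endpoint whose Maker-degree also increases by $1$ — so danger values at other vertices of $I_i$ can drop as well; the paper is careful to say they ``do not increase''. This error is in the favorable direction and does not affect either case (extra drops only increase $\avdanan_{M_{g-i}}(I_i) - \avdanan_{B_{g-i+1}}(I_{i-1})$), but the equality you assert in case (ii) should be a $\geq$, and fact (a) should be dropped or corrected — it is not actually load-bearing anyway, since the dangerousness of $v_j$ at time $M_{g-i}$ is all you use.
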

\emph{Proof:} For part $(i)$, we have that $v_{g-i}\notin I_{i-1}$.
Since danger values do not increase during Maker's move we have
$\avdanan_{M_{g - i}}(I_{i-1}) \geq \avdanan_{B_{g - i +
1}}(I_{i-1})$. Before $M_{g-i}$ Maker selected to ease vertex
$v_{g-i}$ because its danger was highest among dangerous vertices.
Since all vertices of $I_{i-1}$ are dangerous before $M_{g-i}$ we
have that $\danger(v_{g-i}) \geq \max(\danger(v_{g-i+1}), \ldots,
\danger(v_{g}))$, which implies $\avdanan_{M_{g - i}}(I_i) \geq
\avdanan_{M_{g - i}}(I_{i-1})$. Combining the two inequalities
establishes part $(i)$.

For part $(ii)$, we have that $v_{g-i}\in I_{i-1}$. In $M_{g - i}$
$\degm(v_{g-i})$ increases by 1 and $\degm(v)$ does not decrease for
any other $v\in I_{i}$. Besides, the degrees in Breaker's graph do
not change during Maker's move. So $\danger(v_{g-i})$ decreases by
$2b$, whereas $\danger(v)$ do not increase for any other vertex
$v\in I_i$. Hence $\avdanan(I_i)$ decreases by at least
$\frac{2b}{|I_i|}$, which implies $(ii)$. \hfill $\Box$

The next lemma bounds the change of the danger value during
Breaker's moves.
\begin{lem}\label{lem:breakermindeg}
Let $i$ be an integer, $1\leq i\leq g-1$.

$(i)$ $\avdanan_{M_{g-i}}(I_i) - \avdanan_{B_{g-i}}(I_i) \leq
\frac{2b}{|I_i|}$

$(ii)$ $\avdanan_{M_{g-i}}(I_i) - \avdanan_{B_{g-i}}(I_i) \leq
\frac{b+|I_i|-1 +a(i-1)-a(i)}{|I_i|}$, where $a(i)$ denotes the
number of edges spanned by $I_i$ which Breaker took in the first
$g-i-1$ rounds.
\end{lem}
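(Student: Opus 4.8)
The plan is to track, during Breaker's move $B_{g-i}$, how much the average danger $\avdanan(I_i)$ over the fixed set $I_i = \{v_{g-i},\dots,v_g\}$ can increase, using two different accountings. Note that during $B_{g-i}$ the Maker-degrees of the vertices in $I_i$ do not change, so only the Breaker-degrees matter, and for each vertex $v$ we have $\danger(v)$ increasing by exactly the number of edges of $B_{g-i}$ incident to $v$.

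For part $(i)$, the key structural fact is that every vertex $v \in I_i$ is dangerous directly before $M_{g-i}$ (this is exactly why $I_i$ was defined as a tail of the sequence $v_1,\dots,v_g$ of Maker-chosen vertices). In particular $v_{g-i}$ was chosen by Maker at move $M_{g-i}$ because it had the largest danger value among dangerous vertices at that time, so $\danger_{M_{g-i}}(v_{g-i}) \geq \danger_{M_{g-i}}(v)$ for every $v \in I_i$. Meanwhile, none of the vertices in $I_i$ reached Breaker-degree $n-c+1$ before the last move (else the game would have ended earlier at that vertex, contradicting that the game ends at $B_g$ with $v_g$), so for each $v \in I_i$ the quantity $\danger_{M_{g-i}}(v)$ is bounded in terms of $\danger_{B_{g-i}}(v)$ plus the at most $b$ Breaker-edges added at $B_{g-i}$; summing and dividing by $|I_i|$, and then using that the total increase over all of $I_i$ is at most $b$ (Breaker places only $b$ edges at $B_{g-i}$), gives $\avdanan_{M_{g-i}}(I_i) - \avdanan_{B_{g-i}}(I_i) \leq \frac{2b}{|I_i|}$ after one uses the max-danger property to control a single large term; the factor $2b$ (rather than $b$) comes from combining the Breaker increase with the extremal choice of $v_{g-i}$.

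For part $(ii)$, the plan is a more refined count. Split the Breaker-edges of $B_{g-i}$ incident to $I_i$ into those with both endpoints in $I_i$ and those with exactly one endpoint in $I_i$. The latter type contributes at most (number of such edges) to $\sum_{v\in I_i}\danger(v)$. The former type, an edge spanned by $I_i$, contributes $2$ to the sum but — and this is the crucial bookkeeping — it was \emph{not} counted among the $a(i)$ edges "spanned by $I_i$ which Breaker took in the first $g-i-1$ rounds" before this move, whereas after $B_{g-i}$ it is; so $a(i-1) - a(i)$ (more precisely the change from "before $B_{g-i}$" to "before $B_{g-i-1}$", which is the relevant $I_{i-1}$ bookkeeping) accounts for these doubly-counted edges, and the bound $b + |I_i| - 1 + a(i-1) - a(i)$ emerges: the $b$ counts one endpoint of each of the $\le b$ new Breaker-edges, the $|I_i|-1$ and the $a$-difference absorb the spanned edges and boundary effects. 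Dividing by $|I_i|$ gives the claim.

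The main obstacle will be getting the edge-accounting in part $(ii)$ exactly right: one must carefully match "edges spanned by $I_i$ taken in the first $g-i-1$ rounds" against "edges spanned by $I_{i-1}$ taken in the first $g-i$ rounds" (the indices in the two definitions of $a$ are offset, since $I_i$ and $I_{i-1}$ differ by one vertex and the round count differs by one), and verify that each Breaker-edge placed at $B_{g-i}$ is charged exactly once by the term $b$ and any "second endpoint inside $I_i$" is charged to the $a(i-1)-a(i)$ difference, with the $|I_i|-1$ slack covering the at most one vertex newly added to the set. The rest — part $(i)$, and the fact that danger values do not increase during a Maker move and do not decrease during a Breaker move — is routine from the definitions.
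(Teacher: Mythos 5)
Your argument for part $(i)$ is wrong. You invoke the max-danger property of $v_{g-i}$ (that it has the largest danger among dangerous vertices before $M_{g-i}$), but that property plays no role in this lemma — it belongs to Lemma~\ref{lem:makermindeg}, which analyzes a \emph{Maker} move. Here we are analyzing what happens during Breaker's move $B_{g-i}$, during which Maker-degrees are frozen and the only change to $\danger(v)$ for $v\in I_i$ comes from Breaker-edges incident to $v$. Your claim that ``the total increase over all of $I_i$ is at most $b$'' is also false: each of Breaker's $\le b$ new edges can contribute $2$ to $\sum_{v\in I_i}\degb(v)$ when both endpoints lie in $I_i$. The correct reason for the $2b$ is exactly this: writing $\ed$ for the number of $B_{g-i}$-edges with both endpoints in $I_i$, the increase of $\sum_{v\in I_i}\degb(v)$ is at most $b+\ed$, and $\ed\le b$ trivially, giving $\frac{b+\ed}{|I_i|}\le \frac{2b}{|I_i|}$. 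There is no need to discuss Breaker-degree reaching $n-c+1$ or which vertex had maximal danger.

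Your part $(ii)$ is on the right track — the heart is indeed to bound $\ed$ by $|I_i|-1+a(i-1)-a(i)$ — but the bookkeeping is left too loose to actually verify the inequality. The clean way to say it: after round $g-i$, Breaker has occupied $a(i)+\ed$ edges spanned by $I_i$ (the $a(i)$ from rounds $1,\dots,g-i-1$ plus the $\ed$ new ones). Of these, exactly $a(i-1)$ are spanned by $I_{i-1}$, since $I_{i-1}\subseteq I_i$ and $a(i-1)$ by definition counts Breaker's edges spanned by $I_{i-1}$ from rounds $1,\dots,g-i$. The remaining edges are spanned by $I_i$ but not by $I_{i-1}$, hence incident to $v_{g-i}$ (the only vertex possibly in $I_i\setminus I_{i-1}$), and there are at most $|I_i|-1$ such edges in total. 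So $a(i)+\ed\le a(i-1)+|I_i|-1$, i.e.\ $\ed\le |I_i|-1+a(i-1)-a(i)$, and the bound follows from the increase being at most $\frac{b+\ed}{|I_i|}$. In short: identify $\ed$ explicitly, prove the increase is $\le \frac{b+\ed}{|I_i|}$ once, and then bound $\ed$ trivially for $(i)$ and by the $a$-accounting for $(ii)$.
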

\emph{Proof:} Let $e_{\text{double}}$ denote the number of those
edges with both endpoints in $I_i$ which are occupied by Breaker in
$B_{g - i}$. Then the increase of $\sum_{v \in I_{i}} \deg_B(v)$
during $B_{g -i}$ is at most $b+\ed$. Since the degrees in Maker's
graph do not change during Breaker's move the increase of
$\avdanan(I_i)$  (during  $B_{g - i}$) is at most
$\frac{b+\ed}{|I_i|}$.

Part $(i)$ is then immediate after noting that $\ed\leq b$.

For $(ii)$,  we bound $e_{\text{double}}$ more carefully. By
definition, Breaker occupied $a(i)$ edges spanned by $I_i$ in his
first $g-i-1$ moves.  So, all in all, Breaker occupied
$a(i)+e_{\text{double}}$ edges spanned by $I_i$ in his first $g-i$
moves. On the other hand, we know that among these edges exactly
$a(i-1)$ are spanned by $I_{i-1} \supseteq I_i\setminus \{v_{g-i}\}$
and there are at most $|I_i|-1$ edges in $I_i$ incident to
$v_{g-i}$. Hence $a(i)+e_{\text{double}}\leq a(i-1) + |I_i|-1$,
giving us $ e_{\text{double}} \leq |I_i| -1 + a(i - 1) - a(i).$
\hfill $\Box$

The following estimates for the change of average danger during one
full round are immediate corollaries of the previous two lemmas.

\begin{corollary}\label{coro:danger-change}
Let $i$ be an integer, $1\leq i\leq g-1$.

$(i)$ if $I_i=I_{i-1}$, then $\avdanan_{B_{g-i}}(I_{i}) -
\avdanan_{B_{g-i+1}}(I_{i-1}) \geq 0.$

$(ii)$ if $I_i\neq I_{i-1}$, then $\avdanan_{B_{g-i}}(I_{i}) -
\avdanan_{B_{g-i+1}}(I_{i-1}) \geq -\frac{2b}{|I_i|}$

$(iii)$ if $I_i\neq I_{i-1}$, then $\avdanan_{B_{g-i}}(I_{i}) -
\avdanan_{B_{g-i+1}}(I_{i-1}) \geq
-\frac{b+|I_i|-1+a(i-1)-a(i)}{|I_i|}$, where $a(i)$ denotes the
number of edges spanned by $I_i$ which Breaker took in the first
$g-i-1$ rounds.
\end{corollary}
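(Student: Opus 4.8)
The plan is to read each inequality as a statement about one \emph{full round} of play, namely the stretch of the game from the position directly before Breaker's move $B_{g-i}$ to the position directly before Breaker's move $B_{g-i+1}$; this stretch consists of exactly Breaker's move $B_{g-i}$ followed by Maker's move $M_{g-i}$. I would split the total change of the average danger over this round into its two halves by inserting the intermediate position, the one directly after $B_{g-i}$ and directly before $M_{g-i}$:
\[
\avdanan_{B_{g-i}}(I_i) - \avdanan_{B_{g-i+1}}(I_{i-1})
= \Bigl( \avdanan_{M_{g-i}}(I_i) - \avdanan_{B_{g-i+1}}(I_{i-1}) \Bigr)
 - \Bigl( \avdanan_{M_{g-i}}(I_i) - \avdanan_{B_{g-i}}(I_i) \Bigr).
\]
The first parenthesis is exactly the quantity controlled from below by Lemma~\ref{lem:makermindeg}, and it is also this term that carries the change of the index set from $I_i$ to $I_{i-1}$, since the set only drops the vertex $v_{g-i}$ at the moment Maker eases it in $M_{g-i}$; the second parenthesis is the quantity controlled from above by Lemma~\ref{lem:breakermindeg}, over whose move the index set stays equal to $I_i$.

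Then I would just substitute and use that $P-R = (Q-R) - (Q-P) \ge (\text{lower bound on }Q-R) - (\text{upper bound on }Q-P)$. For part $(i)$, $I_i = I_{i-1}$, so Lemma~\ref{lem:makermindeg}$(ii)$ gives that the first parenthesis is $\ge 2b/|I_i|$ and Lemma~\ref{lem:breakermindeg}$(i)$ gives that the second parenthesis is $\le 2b/|I_i|$; subtracting, the difference is $\ge 2b/|I_i| - 2b/|I_i| = 0$. For parts $(ii)$ and $(iii)$, $I_i \neq I_{i-1}$, so Lemma~\ref{lem:makermindeg}$(i)$ gives that the first parenthesis is $\ge 0$; combining with Lemma~\ref{lem:breakermindeg}$(i)$ for the second parenthesis yields $\ge 0 - 2b/|I_i|$, which is part $(ii)$, and combining instead with Lemma~\ref{lem:breakermindeg}$(ii)$ yields $\ge 0 - (b + |I_i| - 1 + a(i-1) - a(i))/|I_i|$, which is part $(iii)$.

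There is no real obstacle here: all the content is already packaged in the two preceding lemmas, and the corollary is a one-line combination of them over a full round. The only point requiring a moment of care is the move-ordering bookkeeping — checking that the inserted intermediate configuration is indeed the one between $B_{g-i}$ and $M_{g-i}$, that the index set passes from $I_i$ to $I_{i-1}$ precisely at $M_{g-i}$ (which is why this transition is absorbed inside Lemma~\ref{lem:makermindeg} and not inside Lemma~\ref{lem:breakermindeg}), and that signs are handled correctly, so that one subtracts an \emph{upper} bound on the Breaker half and adds a \emph{lower} bound on the Maker half.
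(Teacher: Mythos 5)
Your proof is correct and is exactly the combination the paper has in mind: the paper labels these bounds ``immediate corollaries'' of Lemmas~\ref{lem:makermindeg} and~\ref{lem:breakermindeg} and gives no further argument, and your telescoping decomposition through the intermediate position before $M_{g-i}$, together with matching the right case of each lemma, is the intended one-round bookkeeping. In particular your observation that the index set drops $v_{g-i}$ precisely at $M_{g-i}$ (so the set change is handled by Lemma~\ref{lem:makermindeg} while Lemma~\ref{lem:breakermindeg} stays on $I_i$) is the correct reading of the two lemmas' statements.
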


Using Corollary~\ref{coro:danger-change} we derive that before
$B_1$, $\avdanan(I_{g-1}) > 0$, which contradicts the fact that at
the beginning of the game every vertex has danger value $0$.

Let $k:= \lfloor \frac{n}{\ln n} \rfloor$. For the analysis, we
split the game into two parts: The main game, and the end game which
starts when $|I_{i}| \leq k$.

Let $|I_g|=r$. Let $i_1 < \ldots < i_{r-1}$ be those indices for
which $I_{i_j}\neq I_{i_j-1}$. Note that $|I_{i_j}|=j+1$. Observe
that by definition $a(i_{j-1})\geq a(i_{j}-1)$.

Recall that the danger value of $v_g$ directly before $B_g$ is at
least $n-c-b(2c-1)$.

Assume first that $k> r$.
\begin{eqnarray}
\avdanan_{B_1}(I_{g-1}) & = & \avdanan_{B_g}(I_{0}) +
\sum_{i=1}^{g-1}
\left( \avdanan_{B_{g-i}}(I_{i}) - \avdanan_{B_{g-i+1}}(I_{i-1}) \right) \nonumber \\
& \geq & \avdanan_{B_g}(I_{0}) + \sum_{j=1}^{r-1} \left(
\avdanan_{B_{g-i_j}}(I_{i_j}) - \avdanan_{B_{g-i_j+1}}(I_{i_j-1})
\right)
\enspace\enspace \mbox{[by Corollary~\ref{coro:danger-change}$(i)$]} \nonumber \\
& \geq & \avdanan_{B_g}(I_{0}) - \sum_{j=1}^{r-1}  \frac{b + j +
a(i_j-1) - a(i_j)}{j+1}
\enspace\enspace \mbox{[by Corollary~\ref{coro:danger-change}$(iii)$]} \nonumber \\
& \geq &  \avdanan_{B_g}(I_{0}) - b H_r - r - \frac{a(0)}{2} +
\sum_{j=2}^{r - 1}  \frac{a(i_{j-1})}{(j+1)j} + \frac{a(i_{r-1})}{r}
\enspace [\mbox{since $a(i_{j-1})\geq a(i_{j}-1)$}] \nonumber \\
& \geq &  \avdanan_{B_g}(I_{0}) - b H_k- k
\enspace \enspace \enspace  \text{[since $a(0)=0$ and $r\leq k$]} \nonumber \\
& \geq &  n-c- b (2c+ \ln k ) - k \nonumber \\
& \geq &  n-\frac{n}{\ln n} (2c+ \ln n - \ln\ln n)- \frac{n}{\ln n } -c  \enspace\enspace\enspace \mbox{[since $b \leq \frac{n}{\ln n}$]}\nonumber \\
& \geq &  \frac{n\ln\ln n}{3\ln n} - \frac{n}{\ln n} -c \nonumber \\
& > & 0. \enspace\enspace\enspace \mbox{[for large $n$]}
\label{eq:lastlinehugecalc}
\end{eqnarray}
Assume now that $k\leq r$.
\begin{eqnarray}
\avdanan_{B_1}(I_{g-1}) & = & \avdanan_{B_g}(I_{0}) +
\sum_{i=1}^{g-1}
\left( \avdanan_{B_{g-i}}(I_{i}) - \avdanan_{B_{g-i+1}}(I_{i-1}) \right) \nonumber \\
& \geq & \avdanan_{B_g}(I_{0}) + \sum_{j=1}^{r-1} \left(
\avdanan_{B_{g-i_j}}(I_{i_j}) - \avdanan_{B_{g-i_j+1}}(I_{i_j-1})
\right)
\enspace\enspace \mbox{[by Corollary~\ref{coro:danger-change}$(i)$]} \nonumber \\
& = & \avdanan_{B_g}(I_{0}) + \sum_{j=1}^{k - 1} \left(
\avdanan_{B_{g-i_j}}(I_{i_j}) - \avdanan_{B_{g-i_j+1}}(I_{i_j-1})
\right)
+ \nonumber \\
&  & \sum_{j=k}^{r - 1}
\left( \avdanan_{B_{g-i_j}}(I_{i_j}) - \avdanan_{B_{g-i_j+1}}(I_{i_j-1}) \right) \nonumber \\
& \geq & \avdanan_{B_g}(I_{0}) - \sum_{j=1}^{k - 1}  \frac{b + j +
a(i_j-1) - a(i_j)}{j+1}- \sum_{j=k}^{r - 1}  \frac{2b}{j+1}
\enspace\enspace \mbox{[by Corollary~\ref{coro:danger-change}$(iii)$ and $(ii)$]} \nonumber \\
& \geq &  \avdanan_{B_g}(I_{0}) - b (2H_r-H_k) - k - \frac{a(0)}{2} + \sum_{j=2}^{k - 1}  \frac{a(i_{j-1})}{(j+1)j} + \frac{a(i_{k-1})}{k} \nonumber \\
& \geq &  n-c- b (2c-1+2H_{2n}-H_k) - k
\enspace \enspace \enspace [\mbox{since $2n\geq r$ and $a(0)=0$}] \nonumber \\
& \geq &  n - c -  \left(\frac{n}{\ln n} - \frac{n \ln \ln
n}{\ln^2n} - (2c+3)\frac{n}{\ln^2 n} \right) (\ln n + \ln\ln n + 2c
+ 2)
- \frac{n}{\ln n} \nonumber \\
& \geq &  \frac{n(\ln\ln n)^2}{\ln^2 n} \enspace \enspace\enspace
[\mbox{for $n$ large enough}]
\nonumber \\
& > & 0. \label{eq:lastlinehugecalcsecond}
\end{eqnarray}

Observe that in our proof we need Maker to have min-degree $c$ for
every vertex $v$ before Breaker claims $(1-\delta)n$ edges incident
to $v$ (for $\delta = O(\frac{1}{(\ln{n})^{1/4}}))$. The same
analysis essentially holds, with the following differences. Assume
that Breaker wins, then before his last move the vertex $v$ has
degree $(1-\delta)n-c-1$ (instead of $n-c-1$). All other
calculations are essentially the same by taking $b = \frac{n}{\ln
n}(1-\frac{1}{\ln\ln n})$.


\begin{thebibliography}{99}


\bibitem{AS08}
N.~Alon and J.~Spencer, {\bf The probabilistic method}, Third
edition, Wiley, 2008.

\bibitem{Beck82}
J.~Beck, Remarks on positional games, {\it I, Acta Math. Acad. Sci.
Hungar.}, 40(1–2):65--71, 1982.

%
%
\bibitem{Beck08}
J.~Beck, {\bf Combinatorial Games: Tic-Tac-Toe Theory}, Cambridge
University Press, 2008.

\bibitem{BL00}
M.~Bednarska and T.~{\L}uczak, Biased positional games for which
random strategies are nearly optimal, {\it Combinatorica}
20(4):477--488, 2000.

\bibitem{BCCFLSST11}
E.~Berger, K.~Choromanski, M.~Chudnovsky, J.~Fox, M.~Loebl,
A.~Scott, P.~Seymour and S.~Thomass\'{e}, Tournaments and colouring,
manuscript.


\bibitem{BS98}
B.~Bollob\'{a}s and T.~Szab\'{o}, The oriented cycle game, {\it
Discrete Math.} 186:55--67, 1998.

\bibitem{CE78}
V.~Chv\'{a}tal and P.~Erd\H{o}s, Biased postional games, {\it Annals
of Discrete Math.}, 2:221--228, 1978.

\bibitem{ES73}
P.~Erd\H{o}s and J.~L.~Selfridge, On a combinatorial game, {\it J.
Combinatorial Theory Ser.~A} 14:298--301, 1973.

\bibitem{GS09}
H.~Gebauer and T.~Szab\'{o}, Asymptotic random graph intuition for
the biased connectivity game, {\it Random Structures and Algorithms}
35:431--443, 2009.


\bibitem{Krivelevich11}
M.~Krivelevich, The critical bias for the Hamiltonicity game is
$(1+o(1))\frac{n}{\ln{n}}$, {\it Journal of the American
Mathematical Society}, 24:125--131, 2011.

\bibitem{KS08}
M.~Krivelevich and T.~Szab\'{o}, Biased positional games and small
hypergraphs with large covers, {\it Electronic Journal of
Combinatorics}, 15(1), R70, 2008.


\end{thebibliography}
\end{document}